\newtheorem{theorem}{Theorem}[section]
\newtheorem{lemma}[theorem]{Lemma}
\newtheorem{proposition}[theorem]{Proposition}
\newtheorem{corollary}[theorem]{Corollary}
\newtheorem{remark}[theorem]{Remark}
\newcommand\supp{\mathop{\rm supp}}
\newcommand\ran{\mathop{\rm ran}}
\newcommand\id{\mathop{\rm id}}
\newcommand\nph{\varphi}
\newcommand\cb{\mathop{\rm cb}}
\newcommand\ccm{\mathop{\rm CC}}
\newcommand\cbm{\mathop{\rm CB}}
\newcommand{\cl}[1]{\mathcal{#1}}
\newcommand{\bb}[1]{\mathbb{#1}}
\begin{document}

\title[Completely compact Herz-Schur multipliers]{Completely compact Herz-Schur multipliers of dynamical systems}

\author[W. He]{Weijiao He}
\email{whe02@qub.ac.uk}
\address{Department of Mathematics, Taiyuan Normal University, Shanxi, China.}

\author[I. G. Todorov]{Ivan G. Todorov}
\email{todorov@udel.edu}
\address{Department of Mathematical Sciences, University of Delaware, 501 Ewing Hall, Newark, DE19716, USA}

\author[L. Turowska]{Lyudmila Turowska}
\email{turowska@chalmers.se}
\address{Department of Mathematical Sciences, Chalmers University
of Technology and the University of Gothenburg, Gothenburg SE-412 96, Sweden}

\subjclass[2010]{Primary 37A55, Secondary 46L07, 43A55}

\date{}
\maketitle

\begin{abstract}
We prove that if $G$ is a discrete group and $(A,G,\alpha)$ is a C*-dynamical system such that
the reduced crossed product $A\rtimes_{r,\alpha} G$ possesses property (SOAP) then 
every completely compact Herz-Schur $(A,G,\alpha)$-multiplier can be approximated in the completely bounded norm by 
Herz-Schur $(A,G,\alpha)$-multipliers of finite rank. As a consequence, if $G$ has the approximation property (AP) then 
the completely compact Herz-Schur multipliers of $A(G)$ coincide with the closure of $A(G)$ in the completely bounded 
multiplier norm. We study the class of invariant completely compact Herz-Schur multipliers of $A\rtimes_{r,\alpha} G$
and provide a description of this class in the case of the irrational rotation algebra. 
\end{abstract}

\section{Introduction}\label{s_intro}

Fourier multipliers are transformations on function spaces associated with abelian topological groups
that act term-wise on the Fourier series of their elements. 
In the case of the group $\bb{Z}$, such mappings have a 
natural significance in classical Analysis, where
the approximation of a given function by trigonometric polynomials has been of paramount importance. 
When such truncations are realised as Fourier multipliers, these mappings have finite rank and are hence compact. 
In subsequent developments, 
compactness of Fourier multipliers has been studied from other perspectives as well, for example, 
in relation with the compactness of pseudo-differential operators (see e.g. \cite{cordes}).

For non-commutative locally compact groups $G$, where Fourier transform is not readily available, 
the natural setting for the study of Fourier multipliers is provided by the 
Fourier algebra $A(G)$ of the group $G$ -- a commutative Banach algebra consisting of all 
coefficients of the left regular representation of $G$ whose Gelfand spectrum can be canonically identified with $G$. 
This study was initiated in \cite{ch}, with illustrious subsequent history 
and some far-reaching applications to approximation techniques in operator algebra theory, 
where finite rank multipliers have played a cornerstone role (see \cite{brown-ozawa}). 
In \cite{lau}, Lau showed that the Fourier algebra $A(G)$ has a non-zero weakly
compact left multiplier if and only if $G$ is discrete and that, for 
discrete amenable groups, $A(G)$ coincides with the algebra of its weakly compact multipliers.
We refer the reader to \cite{Laug}, 
\cite{ESMAILVANDI} and \cite{Noncompactness} for further related results.

In the case of non-abelian groups, the new property of 
complete boundedness -- brought about by non-commutativity -- becomes a natural requirement, 
and the associated multipliers of $A(G)$ are known as Herz-Schur multipliers \cite{ch}.
Herz-Schur multipliers are related to Schur multipliers --
transformations on the algebra of all bounded operators on $L^2(G)$ 
that extend point-wise multiplication of integral kernels by a given fixed function -- 
via operator transference, pioneered in the area by Bo\.{z}ejko and Fendler \cite{bf}.
We refer the reader to \cite{t_survey}, \cite{tt_survey} for a survey of these results and ideas. 

A natural quantised version of the notion of compactness in the non-commutative setting -- complete compactness -- 
was  first introduced by Saar in his Diplomarbeit \cite{saar} under G. Wittstock's supervision 
and further developed  in unpublished paper \cite{webster} by Webster. More recently it was also studied in the context of operator multipliers in \cite{jltt} and completely almost periodic functionals on completely contractive Banach algebras in \cite{runde}. The notion has been important for the study of various operator space analogues of the Grothendieck approximation property and, in particular,  for questions revolving around finite rank approximation. 
The main aim of this paper is to initiate the study of complete compactness for Herz-Schur multipliers. 
We work in the higher generality of dynamical systems. 
Herz-Schur multipliers of crossed products were introduced, and 
their relation with the surrounding class of operator-valued Schur multipliers investigated, in \cite{mtt}
(see also \cite{bedos-conti} for discrete dynamical systems).
The completely compact operator-valued Schur multipliers were characterised in \cite{he}. 
Here we provide a characterisation of completely compact Herz-Schur multipliers of 
the reduced crossed product of (unital)  C*-algebras $A$ by the action of a discrete group $G$, 
under a mild approximation property.  

The paper is organised as follows.
In Section \ref{s_hsh}, we recall some results about operator-valued Schur multipliers, 
Herz-Schur multipliers of C*-dynamical systems $(A,G,\alpha)$ and their interrelation \cite{mstt} that will be needed in the sequel. 
In Section \ref{s_ccc}, we associate with every completely bounded map $\Phi$ with domain $A\rtimes_{\alpha,r} G$
a Herz-Schur multiplier $F_{\Phi}$, and identify some properties of the map $\Phi \to F_{\Phi}$.
This is the main technical tool, used in several subsequent results. 
We focus on actions of discrete groups, since the dynamical systems of non-discrete groups have no non-trivial 
compact Herz-Schur multipliers (see Proposition \ref{p_ccong}).  
Theorem \ref{th_ccc} provides a characterisation of completely compact 
Herz-Schur $(A,G,\alpha)$-multipliers in the case where $A\rtimes_{\alpha,r}G$ 
possesses the strong  operator approximation property (SOAP) \cite{er2}.
As a consequence, one can choose the maps, approximating the identity, to be in this case
Herz-Schur multipliers of finite rank. 
As an immediate corollary, 
we obtain that for groups with the approximation property (AP) \cite{haagerup-kraus}, a Herz-Schur multiplier 
$u : G\to\mathbb C$ defines a completely compact map $S_u:C_r^*(G)\to C_r^*(G)$ 
if and only if $u$ belongs to the closure $A_{\rm cb}(G)$ of $A(G)$ in the space
$M^{\rm cb}A(G)$ of Herz-Schur multipliers. 
Using a result of Bo\.{z}ejko \cite{bozejko}, 
we prove that the classes of compact and completely compact Herz-Schur multipliers $u : G\to\mathbb C$ 
are in general different. 

The rest of the paper is devoted to special classes of completely compact 
$(A,G,\alpha)$-multipliers. Namely, in Section \ref{s_ex} 
we examine the subclass of invariant multipliers of a C*-dynamical system $(A,G,\alpha)$, 
obtained by lifting completely bounded maps on the C*-algebra $A$ that satisfy a natural covariance property,
and exhibit a canonical way of constructing completely compact multipliers. 
We provide a description of the latter class in the case of the irrational rotation algebra. 
Finally, in Section \ref{s_K}, we consider completely compact multipliers of the dynamical system 
$(c_0(G),G,\alpha)$, where $\alpha$ is induced by left translations. 
Such multipliers induce, via the Stone-von Neumann Theorem, mappings on the space $\cl K$ 
of all compact operators on $\ell^2(G)$ that are characterised in terms of the Haagerup tensor product 
$\cl K\otimes\cl K$. In the opposite direction, we show that any compact Schur multiplier on $\cl K$ gives rise to 
a natural completely compact $(c_0(G),G,\alpha)$-multiplier.

 We finish this introduction with a general comment about 
a separability assumption. Many of our results rely on the development 
of the theory of Herz-Schur multipliers of C*-dynamical systems $(A,G,\alpha)$, undertaken in \cite{mtt}. 
In \cite{mtt}, the C*-algebra $A$ of the dynamical system is assumed to be separable. 
However, if the group $G$ of the dynamical system is discrete, as pointed out before \cite[Theorem 2.1]{mstt}, 
an inspection of the proofs from \cite{mtt} reveals that the separability assumption on $A$ can be lifted. 
In the present paper, all C*-dynamical systems $(A,G,\alpha)$ will be assumed to be over discrete groups and arbitrary C*-algebras $A$.


\section{Schur and Herz-Schur multipliers}\label{s_hsh}

We denote by $\cl B(H)$ the algebra of all bounded linear operators acing on a Hilbert space $H$, and 
by $I_H$ (or $I$ when $H$ is clear from the context) the identity operator on $H$.
For an operator space $\cl X\subseteq \cl B(H)$, we let $M_n(\cl X)$ be the space of all 
$n$ by $n$ matrices with entries in $\cl X$, and identify it with a subspace of $\cl B(H^n)$
(where $H^n$ is the direct sum of $n$ copies of $H$). 
If $\cl X$ and $\cl Y$ are operator spaces, acting on Hilbert spaces $H$ and $K$, respectively,
and $\nph : \cl X\to \cl Y$ is a linear map, we let as usual 
$\nph^{(n)} : M_n(\cl X)\to M_n(\cl Y)$ be the map given by $\nph^{(n)}\left((x_{i,j})_{i,j}\right) = \left(\nph(x_{i,j})\right)_{i,j}$. 
The map $\nph$ is called \emph{completely bounded} if 
$\left\|\nph\right\|_{\rm cb} := \sup_{n\in \bb{N}}\left\|\nph^{(n)}\right\| < \infty$.
We write ${\rm CB}(\cl X,\cl Y)$ for the space of all completely bounded maps from $\cl X$ into $\cl Y$. 
We let $\cl X\otimes_{\min}\cl Y$ be the \emph{minimal tensor product} of $\cl X$ and $\cl Y$, that is, the 
closure of the algebraic tensor product $\cl X\otimes\cl Y$ when considered as a subspace of $\cl B(H\otimes K)$. 
We will use throughout the paper basic results from operator space theory, and we refer the reader to the monographs
\cite{er, pa} for the necessary background.

For a locally compact group $G$, we let $\lambda^0$ be its left regular representation on $L^2(G)$; thus,
$$\lambda^0_t g(s) = g(t^{-1}s), \ \ \ g\in L^2(G), s,t\in G.$$
We use the same symbol, $\lambda^0$, to denote the left regular 
representation of $L^1(G)$ on $L^2(G)$.
Let
$$C_r^*(G) = \overline{\{\lambda^0(f) : f\in L^1(G)\}} \subseteq \cl B(L^2(G))$$
be the \emph{reduced C*-algebra} of $G$, 
${\rm VN}(G) := \overline{C_r^*(G)}^{w^*}$ the von Neumann algebra of $G$
(here $w^*$ denotes the weak* topology of $\cl B(L^2(G))$), 
and $A(G)$ be the \emph{Fourier algebra} of $G$, that is, 
the collection of the functions on $G$ of the form $s\to (\lambda^0_s\xi,\eta)$, where $\xi,\eta\in L^2(G)$.
The algebra $A(G)$ will be equipped with the operator space structure arising from its identification with the predual of 
${\rm VN}(G)$;
its norm will be denoted by $\|\cdot\|_{A}$, and by $\|\cdot\|_{A(G)}$ in cases where we need to emphasise the group.
 We also write $B(G)$ (resp. $B_r(G)$) for the Fourier-Stieltjes 
(resp. the reduced Fourier-Stieltjes) algebra of 
$G$. The space $B(G)$ (resp. $B_r(G)$) is generated by continuous positive-definite functions on $G$
(resp. by positive-definite functions weakly associated to the left regular representation $\lambda^0$ of $G$), 
and one has the inclusions $A(G)\subseteq B_r(G)\subseteq B(G)$. 
By \cite[Proposition 2.1]{eymard}, 
$B(G)$ and $B_r(G)$ can be identified with the dual space of the full C*-algebra $C^*(G)$ and reduced C*-algebra $C_r^*(G)$ of $G$, respectively. Moreover, when $B(G)$ is equipped with the norm
arising from the identification $B(G) = C^*(G)^*$, 
it becomes a Banach algebra with respect to the pointwise multiplication, 
and $A(G)$ and $B_r(G)$ are closed ideals of $B(G)$. 
The norms on $A(G)$ and $B_r(G)$ inherited from $B(G)$ 
coincide with the norms arising 
from the identifications $A(G)^* = {\rm VN}(G)$ and $B_r(G)=C_r^*(G)^*$.
We refer the reader to the monograph \cite{kl} for necessary further background from Abstract Harmonic Analysis.

A function $u : G\to \bb{C}$ is called a \emph{multiplier} of $A(G)$ if 
$uv\in A(G)$ for every $v\in A(G)$. 
We denote by $MA(G)$ the algebra of all multipliers of $A(G)$. 
An element $u\in MA(G)$ is called a \emph{Herz-Schur multiplier} of $A(G)$ \cite{ch} if 
the map $v\to uv$ on $A(G)$ is completely bounded
 (here, and in the sequel, we equip 
$A(G)$ and $B(G)$ with the operator space structures, arising from the identifications $A(G)^* = {\rm VN}(G)$ and $B(G) = C^*(G)^*$). 
We let $M^{\cb}A(G)$ be the algebra of all Herz-Schur multipliers of $A(G)$. We note that $u\in M^{\cb}A(G)$ if and only if the map $S_u: C_r^*(G)\to C_r^*(G)$, $\lambda^0(f)\mapsto\lambda^0(uf)$, $f\in L^1(G)$, is completely bounded, which is proved using similar arguments to the ones in \cite[Proposition 1.2]{ch}.

We henceforth fix a Hilbert space $H$ and a non-degenerate C*-algebra $A\subseteq \cl B(H)$.
Let $G$ be a discrete group
and $\alpha : G\to {\rm Aut}(A)$ be a point-norm continuous homomorphism; 
thus, $(A,G,\alpha)$ is a C*-dynamical system.
We write $\{\delta_s : s\in G\}$ for the canonical orthonormal basis of $\ell^2(G)$.  
We let $\ell^1(G,A)$ be the convolution *-algebra of all summable functions $f : G\to A$,
set $\cl H := \ell^2(G)\otimes H$ and identify it with the Hilbert space $\ell^2(G,H)$
of all square summable $H$-valued functions on $G$. 
Let $\lambda : G\to \cl B(\cl H)$, $t\to \lambda_t$, be the unitary representation of $G$
given by 
$$\lambda_t \xi(s) = \xi(t^{-1}s), \ \ \ s,t\in G, \xi\in \cl H;$$
note that $\lambda_t = \lambda_t^0 \otimes I$.
Let $\pi : A\to \cl B(\cl H)$ be the *-representation given by
$$\pi(a)\xi(s) = \alpha_{s^{-1}}(a)(\xi(s)), \ \ \ a\in A, \xi\in \cl H, s\in G.$$
We note the covariance relation
\begin{equation}\label{eq_pil}
\pi(\alpha_t(a)) = \lambda_t \pi(a) \lambda_t^*, \ \ \ a\in A, t\in G.
\end{equation}
The pair $(\pi,\lambda)$ gives rise to a *-representation
$\tilde{\pi} : \ell^1(G,A) \to \cl B(\cl H)$, given by
\begin{equation}\label{eq_repel}
\tilde{\pi}(f) = \sum_{s\in G} \pi(f(s))\lambda_s, \ \ \ f\in \ell^1(G,A).
\end{equation}
(Note that the series on the right hand side of (\ref{eq_repel}) converges in norm for every $f\in \ell^1(G,A)$).
The reduced crossed product $A\rtimes_{\alpha,r} G$ is defined by letting
$$A\rtimes_{\alpha,r} G = \overline{\tilde{\pi}(\ell^1(G,A))},$$
where the closure is taken in the operator norm of $\cl B(\cl H)$.
Note that, after identifying $A$ with $\pi(A)$, we may consider $A$ as a C*-subalgebra of $A\rtimes_{\alpha,r} G$.
It is well-known that 
 if $\rho : A\to \cl B(K)$ is a faithful non-degenerate *-representation and $\alpha'$ is the canonical action of $G$ on $\rho(A)$, arising from $\alpha$, then
$A\rtimes_{\alpha,r} G\cong \rho(A)\rtimes_{\alpha',r} G$ canonically
(see e.g. \cite[Theorem 7.7.5]{ped}).

Identifying $\cl H$ with $\oplus_{s\in G} H$, we associate to every operator $x\in \cl B(\cl H)$
a matrix $(x_{p,q})_{p,q}$, where $x_{p,q}\in \cl B(H)$, $p,q\in G$; thus, 
$$\langle x_{p,q} \xi,\eta\rangle = \langle x (\delta_q \otimes \xi), \delta_p \otimes \eta\rangle, \ \ \ 
\xi,\eta\in H, p,q\in G.$$
In particular, if $a\in A$ and $t\in G$ then 
\begin{equation}\label{eq_piapq}
\left(\pi(a)\lambda_t\right)_{p,q} 
=
\begin{cases}
\alpha_{p^{-1}}(a) & \text{if } pq^{-1} = t\\
0 & \text{if } pq^{-1} \neq t.
\end{cases}
\end{equation}
Equation (\ref{eq_piapq}) implies that 
$$\left(\pi(a)\lambda_t\right)_{e,qp^{-1}} = \delta_{t, pq^{-1}} a$$
(here $\delta_{s,r}$ is the Kronecker symbol).
By linearity and continuity, 
\begin{equation}\label{translationformula} 
x_{p,q} = \alpha_{p^{-1}} (x_{e, qp^{-1}}), \ \ \ x\in A\rtimes_{\alpha,r} G.
\end{equation}

Let $\cl E : \cl B(\ell^2(G))\otimes_{\min} A \to A$ be the (unital completely positive) map given by 
$\cl E(x) = x_{e,e}$; note that $\cl E(\pi(a)) = a$, $a\in A$. 
Equation (\ref{eq_piapq}) implies that $x_{e,r} = \cl E(x\lambda_r)$, $x\in A\rtimes_{\alpha,r} G$, $r\in G$. 
Now (\ref{translationformula}) can be rewritten as
\begin{equation}\label{eq_T}
x_{p,q} = \alpha_{p^{-1}}(\cl E(x\lambda_{qp^{-1}})), \ \ \ x\in A\rtimes_{\alpha,r} G, \ p,q\in G.
\end{equation}

To every operator $x\in A\rtimes_{\alpha,r} G$, one can associate the family $(a_s)_{s\in G}\subseteq A$, where
$a_t = \cl E(x\lambda_{t^{-1}})$; we call $\sum_{t\in G} \pi(a_t) \lambda_t$ the \emph{Fourier series} of $x$ 
and write $x\sim \sum_{t\in G} \pi(a_t) \lambda_t$ (no convergence is assumed).
Equation (\ref{eq_T}) shows that 
\begin{equation}\label{eq_melc}
x\sim \sum_{t\in G}\pi(a_t)\lambda_t \ \Longrightarrow \ x_{p,q} = \alpha_{p^{-1}}(a_{pq^{-1}}).
\end{equation}
Thus, if $x\in A\rtimes_{\alpha,r} G$ then the diagonal of its matrix coincides with the family
$(\alpha_{r^{-1}}(\cl E(x)))_{r\in G}$.

We note that
\begin{equation}\label{eq_el}
\alpha_{t}(\cl E(x))=\cl E(\lambda_t x\lambda_t^*), \ \ \ t\in G, x\in A\rtimes_{\alpha,r} G.
\end{equation}
The latter equality follows from (\ref{eq_pil}) 
in the case where 
$x = \pi(a) \lambda_s$ and follows by linearity and continuity for a general $x\in A\rtimes_{\alpha,r} G$.

If $F : G\to {\rm CB}(A)$ is a bounded map and $f\in \ell^1(G,A)$, let $F\cdot f\in  \ell^1(G,A)$ be the function given by
$$(F\cdot f)(t) = F(t)(f(t)), \ \ \ t\in G.$$
Recall  \cite[Definition 3.1]{mtt} that $F$ is called a \emph{Herz-Schur $(A,G,\alpha)$-multiplier}
if the map $S_F$, given by
\begin{equation}\label{eq_dehs}
S_F(\tilde{\pi}(f)) = \tilde{\pi}(F\cdot f), \ \ \ f\in \ell^1(G,A),
\end{equation}
is completely bounded.
If $F$ is a Herz-Schur $(A,G,\alpha)$-multiplier, then $S_F$ has a (unique) extension to a
completely bounded map on $A\rtimes_{\alpha,r} G$, which will be denoted in the same way.
We write $\frak{S}(A,G,\alpha)$ for the space of all Herz-Schur $(A,G,\alpha)$-multipliers, 
and set $\|F\|_{\rm m} = \|S_F\|_{\rm cb}$, $F\in \frak{S}(A,G,\alpha)$. 
Note that, if $u\in M^{\cb}A(G)$ and $\tilde{u} : G\to {\rm CB}(A)$ is the function given by 
$\tilde{u}(t) = u(t)\id_A$, then $\tilde{u}\in \frak{S}(A,G,\alpha)$ and its norm in $\frak{S}(A,G,\alpha)$
coincides with its norm in $M^{\cb}A(G)$ \cite[Remark 3.2 (ii)]{mtt}. 
Without risk of confusion, for brevity we will denote by $S_u$ the map $S_{\tilde{u}}$.

Let $\nph : G\times G\to {\rm CB}(A,\cl B(H))$ be a bounded function and 
$S_{\nph}$ be the map from the space of all $G\times G$ $A$-valued matrices into the space
of all $G\times G$ $\cl B(H)$-valued matrices, given by 
$$S_{\nph}\left((x_{p,q})_{p,q}\right) = \left(\nph(q,p)(x_{p,q})\right)_{p,q}.$$
The map $\nph$ is called a \emph{Schur $A$-multiplier}  if 
$S_{\nph}$ is a completely bounded map from $\cl B(\ell^2(G)) \otimes_{\min} A$ into 
$\cl B(\cl H)$. 
 By the paragraph after \cite[Theorem 2.1]{mstt}, this 
definition is equivalent, for a discrete group $G$, to the definition 
of Schur $A$-multipliers in 
\cite[Definition 2.1]{mtt}. 
For a Schur $A$-multiplier $\nph$, we write $\|\nph\|_{\rm m} = \|S_{\nph}\|_{\rm cb}$.
Schur $\bb{C}$-multipliers are referred to as \emph{Schur multipliers}.
For a bounded function $F : G\to {\rm CB}(A)$, let
$\mathcal{N}(F): G \times G \to {\rm CB}(A)$ be the function given by
\[ \mathcal{N}(F)(s,t) (a) = \alpha_{t^{-1}}(F(ts^{-1})(\alpha_t(a))), \;\;\;\; s,t \in G, \ a \in A.\]
In the case where $A = \bb{C}$, we write $N(u)$ in the place of $\cl N(u)$. 
The following statement \cite[Theorem 3.18]{mtt} is a crossed product version of a classical 
result of Bo\.{z}ejko and Fendler \cite{bf} in the case $A = \bb{C}$.

\begin{theorem}\label{th_tra}
The map $\cl N$ is an isometric injection
from $\frak{S}(A,\alpha,G)$ into the algebra of Schur $A$-multipliers. 
\end{theorem}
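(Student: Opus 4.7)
The plan is to obtain Theorem \ref{th_tra} via a direct matrix-coefficient identity that presents $S_F$ as the restriction of an operator-valued Schur multiplier, and then to upgrade this identification to an isometry by a Gilbert-type factorisation of $F$.

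\smallskip
\noindent\textbf{Step 1 (matrix identity).} I would first verify that for every $x\in A\rtimes_{\alpha,r}G$,
\[
(S_F(x))_{p,q} \;=\; \cl N(F)(q,p)(x_{p,q}),\qquad p,q\in G.
\]
The computation is immediate on the dense subalgebra $\tilde{\pi}(\ell^1(G,A))$: if $f(t)=a_t$ and $x=\tilde{\pi}(f)$, then by (\ref{eq_dehs}) $S_F(x)=\sum_t\pi(F(t)(a_t))\lambda_t$, so (\ref{eq_melc}) yields
$(S_F(x))_{p,q}=\alpha_{p^{-1}}(F(pq^{-1})(a_{pq^{-1}}))$, while (\ref{eq_melc}) applied to $x$ gives $a_{pq^{-1}}=\alpha_p(x_{p,q})$. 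Substituting and matching with the definition of $\cl N$ yields the claim. Continuity of $x\mapsto x_{p,q}$ and density extend the identity to all of $A\rtimes_{\alpha,r}G$.

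\smallskip
\noindent\textbf{Step 2 (the two norm estimates).} Granted Step 1 and that $\cl N(F)$ is a Schur $A$-multiplier, restricting $S_{\cl N(F)}$ (acting on the matrices of $\cl B(\ell^2(G))\otimes_{\min}A$) to the matrices arising from $A\rtimes_{\alpha,r}G$ recovers $S_F$, giving $\|F\|_{\rm m}=\|S_F\|_{\cb}\le\|S_{\cl N(F)}\|_{\cb}=\|\cl N(F)\|_{\rm m}$. For the reverse inequality, I would invoke the operator-valued Stinespring/Wittstock factorisation of the completely bounded map $S_F$ and extract a Gilbert-type representation of $F$: there exist a Hilbert space $K$, a $*$-representation $\rho:A\to\cl B(K)$ and bounded families $\{V(t)\}_{t\in G}, \{W(t)\}_{t\in G}\subset\cl B(H,K)$, whose row/column bounds multiply to at most $\|F\|_{\rm m}$, such that $F(ts^{-1})(a)=V(s)^*\rho(\alpha_t(a))W(t)$ (or its $\alpha$-twisted analogue). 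Applying $\alpha_{t^{-1}}$ to both sides transports this factorisation into a factorisation of $\cl N(F)(s,t)$ of precisely the shape $\cl N(F)(s,t)(a)=\tilde V(s)^*\tilde\rho(a)\tilde W(t)$ that characterises Schur $A$-multipliers with norm bounded by $\|F\|_{\rm m}$.

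\smallskip
\noindent\textbf{Step 3 (injectivity) and main obstacle.} If $\cl N(F)=0$ then, taking $s=e$, $\alpha_{t^{-1}}(F(t)(\alpha_t(a)))=0$ for every $a\in A$ and $t\in G$; as $\alpha_t$ is bijective, $F(t)=0$ for each $t$, so $F\equiv 0$. The main obstacle is Step 2: obtaining a Herz-Schur factorisation of $F$ whose factors carry the covariance under $\alpha$ needed to yield a Schur factorisation of $\cl N(F)$ with matching norm. This is exactly the crossed-product analogue of the Bo\.zejko--Fendler mechanism, where the scalar case exploits Haagerup's characterisation; here one needs the operator-valued version adapted to the dynamical system, which is the technical heart of the statement.
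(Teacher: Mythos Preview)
The paper does not give its own proof of Theorem~\ref{th_tra}; it is quoted as \cite[Theorem~3.18]{mtt}, the crossed-product Bo\.zejko--Fendler transference. Your Step~1 is exactly the observation the paper records immediately after the statement (namely $S_{\cl N(F)}|_{A\rtimes_{\alpha,r}G}=S_F$), and your Step~3 is the obvious injectivity argument.

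Your Step~2 outline is also on the right track, and in fact the paper carries out precisely this computation later, in the proof of Proposition~\ref{p_FPhi}\,(i): one applies the Haagerup--Paulsen--Wittstock factorisation to a completely bounded map $\Phi$ on the \emph{crossed product} (not on $A$), writes $\Phi(x)=W^*\rho(x)V$ with $\rho$ a representation of $A\rtimes_{\alpha,r}G$, and then the calculation (\ref{eq_5})--(\ref{eq_Nst}) produces the Schur factorisation $\cl N(F_\Phi)(s,t)(a)=\tilde W(t)^*(\rho\circ\pi)(a)\tilde V(s)$ with $\sup_t\|\tilde W(t)\|\sup_s\|\tilde V(s)\|\le\|V\|\|W\|$. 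Specialising to $\Phi=S_F$ and using $F_{S_F}=F$ gives $\|\cl N(F)\|_{\rm m}\le\|S_F\|_{\cb}=\|F\|_{\rm m}$, which is the reverse inequality you seek. The only adjustment to your sketch is that the Wittstock representation should be taken for $S_F$ on $A\rtimes_{\alpha,r}G$, yielding a representation of the crossed product; the $*$-representation of $A$ appearing in the Schur factorisation is then $\rho\circ\pi$, and the twist by $\alpha$ is absorbed into the operators $\tilde V(s),\tilde W(t)$ via the unitaries $\rho(\lambda_s)$. With that clarification your proposal matches the argument in \cite{mtt} and the machinery the present paper reuses.
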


 Note that the image $\cl N(\frak{S}(A,\alpha,G))$ coincides with the so-called invariant Schur-$A$-multipliers, denoted by $\frak S_{\rm inv}(G,G,A)$ in \cite[p. 413]{mtt}.

 We remark that in \cite{mtt} Herz-Schur $(A,G,\alpha)$-multipliers and Schur $A$-multipliers were defined for general locally compact group $G$ and separable C*-algebra $A$. But in the case of discrete $G$  the separability condition on $A$ can be removed without changing the statements from \cite{mtt} 
(see the comment before \cite[Theorem 2.1]{mstt}).

\smallskip

The following observation will be frequently used: 
 If $G$ is discrete then $S_{\cl N(F)}|_{A\rtimes_{\alpha,r}G}=S_F$ for any $F\in\frak{S}(A,\alpha,G)$. 
To see this,  it suffices to  show that 
$$S_{\cl N(F)}(\pi(a)\lambda_s)=\pi(F(s)(a))\lambda_s, \ \ \ a\in A, s\in G.$$
Write $\pi(a)\lambda_s=(x_{p,q})_{p,q}$ for $x_{p,q}=\delta_{s,pq^{-1}}\alpha_{p^{-1}}(a)$ (see (\ref{eq_piapq}))
and 
\begin{eqnarray*}
(S_{\cl N(F)}(\pi(a)\lambda_s)_{p,q}
& = &
\delta_{s,pq^{-1}}\cl N(F)(q,p)(\alpha_{p^{-1}}(a))\\
& =& 
\delta_{s,pq^{-1}}\alpha_{p^{-1}}(F(pq^{-1})(a))\\
& = &  
\delta_{s,pq^{-1}}\alpha_{p^{-1}}(F(s)(a))
= 
(\pi(F(s)(a))\lambda_s)_{p,q}.
\end{eqnarray*}


\section{Characterisation of complete compactness}\label{s_ccc}

Let $\cl X$ and $\cl Z$ be  operator spaces. A completely  bounded map $\Psi : \cl X \to \cl Z$ is called
\emph{completely compact} if for very $\epsilon > 0$ there exists a finite dimensional
subspace $\cl Y\subseteq \cl Z$ such that
\begin{equation}\label{eq_psicc}
{\rm dist}(\Psi^{(m)}(x), M_m(\cl Y)) < \epsilon, \mbox{ for all } x\in M_m(\cl X), \|x\| \leq 1, \mbox{ and all } m\in \bb{N}.
\end{equation}
We denote by $\ccm(\cl X, \cl Z)$ the space of all completely compact linear maps from $\cl X$ to $\cl Z$, and write $\ccm(\cl X)$ for $\ccm (\cl X,\cl X)$. 
By ${\rm F}(\cl X)$ we denote the subspace of all (linear) maps of finite rank on $\cl X$. 
Clearly, ${\rm F}(\cl X)\subseteq \ccm (\cl X)$ and any completely compact map  on $\cl X$ is compact.

A net $(\Phi_i)_{i\in \bb{I}} \subseteq \cbm(\cl X)$ is said to converge to 
$\Phi\in\cbm(\cl X)$  in the {\it strongly stable point norm topology} \cite[p. 197]{er2}
if
$$\left\|(\id\otimes \Phi_i)(x) - (\id\otimes\Phi)(x)\right\|\to_{i\in \bb{I}} 0, \ \ \ x\in \cl B(K)\otimes_{\min} \mathcal X,$$
for any Hilbert space $K$.
An operator space $\mathcal X$ is said to possess the {\it strong operator approximation property (SOAP)}
if the identity map on $\mathcal X$ can be approximated by finite rank maps in the strongly stable point norm topology.
 The notion was introduced by Effros and Ruan in \cite{er2} as an operator space analogue of the approximation property for Banach spaces. It admits a characterisation using complete compactness. Namely, $\cl X$ has the SOAP if for any operator space $\cl Z$ and any completely compact map $\Psi\in \ccm(\cl Z,\cl X)$, $\Psi$ can be approximated by finite rank maps completely uniformly, i.e. in  the $\|\cdot\|_{\cb}$-norm, see \cite[Theorem 5.9]{webster} or \cite[Proposition 4.4.6]{webster_dis}.
In particular, if $\mathcal X$ has the SOAP then  
$\overline{{\rm F}(\mathcal X)}^{\rm cb} = {\rm CC}(\mathcal X)$.  

\begin{lemma}\label{l_soat}
Let $\cl X$ be an operator space, 
$\Psi \in \ccm(\cl X)$, $\Phi\in \cbm(\cl X)$ and  $(\Phi_{i})_{i\in \bb{I}}\subseteq \cbm(\cl X)$ be a
net such that $\Phi_i\to_{i\in \bb{I}} \Phi$ in the strongly stable point-norm topology. 
Then $\|\Phi_i \circ \Psi - \Phi \circ \Psi\|_{\cb}\to_{i\in \bb{I}} 0$. 
\end{lemma}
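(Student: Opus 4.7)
The plan is to reduce everything to an $\epsilon$-argument that separates $\Psi^{(m)}(x)$ into a part lying in $M_m(\cl Y)$ for a fixed finite-dimensional $\cl Y$, where convergence is easy to control, and a part of small norm, where a uniform cb-bound suffices. The main steps are: first, use complete compactness of $\Psi$ to pin down $\cl Y$; second, handle the finite-dimensional piece using strong point-norm convergence; third, handle the small residual piece using a uniform cb-bound on the net $(\Phi_i)$.

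First I would observe that $(\Phi_i)$ is uniformly bounded in cb-norm. Indeed, specialising the strongly stable point-norm hypothesis to $K = \ell^2$, the net $\id_{\cl B(\ell^2)}\otimes\Phi_i$ is pointwise convergent on the Banach space $\cl B(\ell^2)\otimes_{\min}\cl X$; the Banach--Steinhaus theorem then yields $M := \sup_i \|\Phi_i\|_{\cb} = \sup_i \|\id_{\cl B(\ell^2)}\otimes \Phi_i\| < \infty$. Also, taking $K = \bb C$ shows that $\Phi_i(y) \to \Phi(y)$ in norm for every $y \in \cl X$.

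Next, fix $\epsilon > 0$. By complete compactness of $\Psi$, choose a finite-dimensional subspace $\cl Y \subseteq \cl X$ such that for every $m$ and every $x \in M_m(\cl X)$ with $\|x\| \leq 1$ there exists $y \in M_m(\cl Y)$ with $\|\Psi^{(m)}(x) - y\| < \epsilon$; in particular $\|y\| \leq \|\Psi\|_{\cb} + \epsilon$. Splitting $\Psi^{(m)}(x) = y + z$ with $\|z\|<\epsilon$ and applying $(\Phi_i - \Phi)^{(m)}$ gives
\[
\bigl\|(\Phi_i\circ\Psi - \Phi\circ\Psi)^{(m)}(x)\bigr\|
\leq \bigl\|(\Phi_i - \Phi)\big|_{\cl Y}\bigr\|_{\cb}(\|\Psi\|_{\cb}+\epsilon) + (M+\|\Phi\|_{\cb})\epsilon,
\]
uniformly in $m$ and $x$ in the unit ball of $M_m(\cl X)$.

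The step I regard as the main technical point is showing that $\|(\Phi_i - \Phi)|_{\cl Y}\|_{\cb} \to 0$. Since $\cl Y$ is finite-dimensional, pick a basis $y_1,\dots,y_N$ and a dual basis $y_1^*,\dots,y_N^*$ in $\cl Y^*$; each $y_k^*$ is completely bounded by finite-dimensionality. For any linear $T : \cl Y \to \cl X$, expanding $w \in M_m(\cl Y)$ as $w = \sum_k y_k \otimes (y_k^*\otimes \id_{M_m})(w)$ and applying $T^{(m)}$ yields the uniform (in $m$) estimate $\|T\|_{\cb} \leq \sum_k \|y_k^*\|_{\cb}\,\|T(y_k)\|$. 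Applying this with $T = (\Phi_i - \Phi)|_{\cl Y}$ and using the pointwise norm convergence $\Phi_i(y_k) \to \Phi(y_k)$ forces $\|(\Phi_i - \Phi)|_{\cl Y}\|_{\cb} \to 0$. Taking the supremum over $m$ and $x$ in the displayed bound then gives $\limsup_i \|\Phi_i\circ\Psi - \Phi\circ\Psi\|_{\cb} \leq (M+\|\Phi\|_{\cb})\epsilon$, and letting $\epsilon \to 0$ completes the argument.
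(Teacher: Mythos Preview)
Your approach is more hands-on than the paper's, which simply invokes \cite[Proposition~5.6]{webster} (strongly stable point-norm convergence is the same as completely uniform convergence on completely compact sets) and then observes that the family $(\Psi^{(m)}(\mathrm{ball}\,M_m(\cl X)))_m$ sits inside such a set. Unpacking this via an explicit $\epsilon$-splitting against a fixed finite-dimensional $\cl Y$ is a natural way to make the argument self-contained, and your treatment of the finite-dimensional piece---the estimate $\|T\|_{\cb}\le\sum_k\|y_k^*\|_{\cb}\,\|T(y_k)\|$ for $T\colon\cl Y\to\cl X$, and hence $\|(\Phi_i-\Phi)|_{\cl Y}\|_{\cb}\to 0$---is correct and pleasant.

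There is, however, a gap in your first step. You deduce $M=\sup_i\|\Phi_i\|_{\cb}<\infty$ from Banach--Steinhaus, but that principle needs the family to be pointwise \emph{bounded}; a pointwise convergent \emph{net} need not be. For instance, index by pairs $(F,n)$ with $F\subset\ell^2$ finite and $n\in\bb N$, ordered by $(F,n)\le(F',n')\Leftrightarrow F\subseteq F'$, and set $T_{(F,n)}=n(I-P_{\mathrm{span}\,F})$: this net converges to $0$ at every point of $\ell^2$, yet no tail is norm-bounded, so Banach--Steinhaus does not apply. Without the uniform bound $M$ you cannot control the residual term $(\Phi_i-\Phi)^{(m)}(z)$ with $\|z\|<\epsilon$, and the finite-dimensional machinery does not help there. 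Your argument goes through verbatim if $(\Phi_i)$ is a sequence, or under the additional hypothesis that the net is cb-bounded (which is satisfied in every use the paper makes of this lemma); for general nets as stated, this step needs repair.
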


\begin{proof}
By \cite[Proposition 5.6]{webster} $\Phi_i\to_{i\in \bb{I}} \Phi$ 
completely uniformly on completely compact sets of $\mathcal X$ 
(see \cite[Definitions 5.1 and 5.2]{webster} for the terminology).  If $\Psi$ is completely compact then 
$(\{\Psi^{(n)}(x) : x\in M_n(\cl X), \|x\|\leq 1\})_{n\in \bb{N}} $ is a subset of a completely compact set and hence 
$\|\Phi_i\circ\Psi-\Phi\circ\Psi\|_{\rm cb}\to_{i\in \bb{I}} 0$.
\end{proof}

The following observation shows that, when studying compact or completely compact
multipliers, the interest lies only in discrete groups.

\begin{proposition}\label{p_ccong}
Let $G$ be a non-discrete locally compact group and $u : G\to \bb{C}$ be a Herz-Schur multiplier.
If the map $S_u : C^*_r(G)\to C^*_r(G)$ is compact then $u = 0$.
\end{proposition}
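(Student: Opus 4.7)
The plan is to exploit the classical fact that $C_r^*(G)$ is unital if and only if $G$ is discrete. In our setting $G$ is non-discrete, so $C_r^*(G)$ has no unit; since $C_r^*(G)$ is a $*$-subalgebra of $\cl B(L^2(G))$ and $\lambda^0_s$ is unitary, the inclusion $\lambda^0_s \in C_r^*(G)$ would entail $I = \lambda^0_s (\lambda^0_s)^* \in C_r^*(G)$, which is impossible. Thus $\lambda^0_s \notin C_r^*(G)$ for every $s\in G$. This, combined with the continuity of $u$ (every multiplier of $A(G)$ is continuous, since $A(G)\subseteq C_0(G)$ separates points and $uv$ is continuous for every $v\in A(G)$), is the obstruction that will drive the argument.

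Fix a bounded approximate identity $(e_i)_i$ of $L^1(G)$ consisting of non-negative functions of integral $1$ whose supports shrink to $\{e\}$. For every $s\in G$, define
$$x_i^{(s)} := \lambda^0(L_s e_i) = \lambda^0_s\, \lambda^0(e_i) \in C_r^*(G),$$
where $(L_s e_i)(t) := e_i(s^{-1}t)$. Then $\|x_i^{(s)}\| \leq 1$, and $x_i^{(s)} \to \lambda^0_s$ in the strong operator topology. By the previous paragraph the net $(x_i^{(s)})_i$ admits no norm-convergent subnet in $C_r^*(G)$: any such limit would have to equal $\lambda^0_s$ on one hand (by strong convergence) and lie in $C_r^*(G)$ on the other.

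Using the identity $S_u(\lambda^0(f)) = \lambda^0(uf)$ for $f\in L^1(G)$, which follows from $S_u$ being the $w^*$-continuous extension of the dual of multiplication by $u$ on $A(G)$, one writes
$$S_u\bigl(x_i^{(s)}\bigr) - u(s)\, x_i^{(s)} = \lambda^0\bigl((u - u(s))\cdot L_s e_i\bigr),$$
whose operator norm is bounded by $\int_G |u(sr) - u(s)|\, e_i(r)\,dr$. This tends to $0$, since $u$ is continuous at $s$ and the supports of $e_i$ shrink to $\{e\}$. If $S_u$ is compact, then $(S_u(x_i^{(s)}))_i$ has a norm-convergent subnet; by the estimate above, so does $(u(s)\, x_i^{(s)})_i$. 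The second paragraph then forces $u(s) = 0$, and as $s \in G$ is arbitrary, $u \equiv 0$.

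The only subtle ingredient is the initial observation that $\lambda^0_s\notin C_r^*(G)$ for non-discrete $G$, which rests on the classical non-unitality of $C_r^*(G)$; the remainder is a standard approximate-identity calculation combined with the defining property of compact operators applied to the bounded net $(x_i^{(s)})_i$.
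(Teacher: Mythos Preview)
Your proof is correct, and it takes a genuinely different route from the paper's. The paper argues by duality: since $S_u$ is compact, so is its adjoint $S_u^*$ on $B_r(G)$, which acts as multiplication by $u$; one then checks that $S_u^*$ leaves $A(G)$ invariant and invokes Lau's result \cite[Proposition~6.9]{lau} that $A(G)$ has no non-zero compact multiplier when $G$ is non-discrete. Your argument, by contrast, works directly on $C_r^*(G)$: you use the non-unitality of $C_r^*(G)$ (equivalent to $G$ being non-discrete) to see that $\lambda^0_s\notin C_r^*(G)$, and then a concrete approximate-identity computation shows that compactness of $S_u$ would force a norm-convergent subnet of $(\lambda^0_s\lambda^0(e_i))_i$ whenever $u(s)\neq 0$, contradicting $\lambda^0_s\notin C_r^*(G)$. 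Your approach is more self-contained and elementary, requiring no external black box beyond the classical fact that $C_r^*(G)$ is unital only for discrete $G$; the paper's approach is shorter on the page but defers the real work to Lau's theorem. Both arguments ultimately rest on the same phenomenon---that $L^1(G)$ (and hence $C_r^*(G)$) fails to contain the point masses $\delta_s$ when $G$ is non-discrete---but you make this explicit at the level of the reduced C*-algebra rather than via the Fourier algebra predual.
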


\begin{proof}
Suppose that $S_u$ is compact.
Let $B_r(G)$ be the reduced Fourier-Stieltjes algebra of $G$.
Identifying the dual $C^*_r(G)^*$ of $C^*_r(G)$ with $B_r(G)$, we have that the adjoint 
map $S_u^* : B_r(G)\to B_r(G)$
is compact (see e.g. \cite[Theorem 1.4.4]{murphy}).  Note that $S_u^*(v) = uv$, $v\in B_r(G)$.
Since $A(G)\subseteq B_r(G)$,  $A(G)$ is the closed hull of its compactly supported elements and $A(G)\cap C_c(G)=B_r(G)\cap C_c(G)$ (see \cite[Proposition 2.3.3]{kl}), the map $S_u^*$ leaves $A(G)$ invariant. The restriction of $S_u^*$ to $A(G)$ is thus compact, 
and by \cite[Proposition 6.9]{lau}, $u = 0$.
\end{proof}

We henceforth assume that the group $G$ is discrete. Recall that $A\subseteq \cl B(H)$ is a 
fixed non-degenerate C*-algebra, equipped with an action $\alpha : G\to {\rm Aut}(A)$. 
It will be convenient to set 
$$\cl V_{A,G,\alpha} = {\rm span}\left\{\pi(a)\lambda_s : a\in A, s\in G\right\};$$
thus, $\cl V_{A,G,\alpha}$ is a (dense) subalgebra of $A\rtimes_{\alpha,r} G$. 
We will write 
$$\frak{S}_{\rm cc}(A,G,\alpha) = \left\{F\in \frak{S}(A,G,\alpha) : S_F \mbox{ is completely compact}\right\}.$$
If $\Phi : A\rtimes_{\alpha,r} G \to \cl B(\ell^2(G))\otimes_{\min} A$ is a bounded linear map, 
we let $F_\Phi : G\to \cl B(A)$ be the function given by
\begin{equation}\label{eq_hPhi}
F_\Phi(s)(a) = \cl E(\Phi(\pi(a)\lambda_s)\lambda_s^*).
\end{equation}

\begin{proposition}\label{p_FPhi}
Let $\Phi : A\rtimes_{\alpha,r} G \to \cl B(\ell^2(G))\otimes_{\min} A$ be a completely bounded map. 
\begin{itemize}
\item[(i)] The map $F_{\Phi}$ is a Herz-Schur $(A,G,\alpha)$-multiplier and $\|F_{\Phi}\|_{\rm m}\leq \|\Phi\|_{\rm cb}$;
\item[(ii)] If $F \in \frak{S}(A,G,\alpha)$ 
then $F_{S_F} =  F$; 
\item[(iii)] If $\Phi$ completely compact then $F_{\Phi}(s)$ is completely compact for every $s\in G$;
\item[(iv)] If $\Phi$ has finite rank and ${\rm ran}\Phi \subseteq \cl V_{A,G,\alpha}$ then 
$F_{\Phi}(s)$ has finite rank for every $s\in G$ and $S_{F_\Phi}\in {\rm F}(A\rtimes_{\alpha,r}G)$; 
\item[(v)] If $\nph$ is a Schur $A$-multiplier and $\Psi$ is the restriction of $S_{\nph}$ to $A\rtimes_{\alpha,r} G$
then $F_{\Psi}(s) = \nph(s^{-1},e)$, $s\in G$.  
\end{itemize}
\end{proposition}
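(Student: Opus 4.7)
The plan is to prove (ii), (iii), and (v) as short computations, handle (iv) by a finite-rank unpacking, and reserve (i) for the main technical step.

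For (ii), combine the identity $S_F(\pi(a)\lambda_s) = \pi(F(s)(a))\lambda_s$ from the observation after Theorem~\ref{th_tra} with $\lambda_s\lambda_s^* = I$ and $\cl E\circ\pi = \id_A$ (immediate from (\ref{eq_piapq}) with $p=q=t=e$) to read off $F_{S_F}(s)(a) = \cl E(\pi(F(s)(a))) = F(s)(a)$. For (iii), factor
\[
F_\Phi(s) \;=\; \cl E\circ R_{\lambda_s^*}\circ\Phi\circ M_s,
\]
where $M_s\colon A\to A\rtimes_{\alpha,r}G$ is $a\mapsto\pi(a)\lambda_s$ and $R_{\lambda_s^*}$ is right multiplication by $\lambda_s^*$ on $\cl B(\ell^2(G))\otimes_{\min}A$. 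Each of $M_s$, $R_{\lambda_s^*}$, and $\cl E$ has cb norm at most $1$, so sandwiching the completely compact $\Phi$ between them produces a completely compact $F_\Phi(s)$. For (v), write $\pi(a)\lambda_s$ in matrix form using (\ref{eq_piapq}), apply $S_\nph$ entry-wise, multiply by $\lambda_s^*$ on the right, and take the $(e,e)$-entry; the Kronecker deltas collapse to $F_\Psi(s)(a) = \nph(s^{-1},e)(a)$.

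For (iv), write $\Phi(\cdot) = \sum_{j=1}^n f_j(\cdot)v_j$ with $f_j\in (A\rtimes_{\alpha,r}G)^*$ and $v_j = \sum_k\pi(a_{j,k})\lambda_{t_{j,k}}\in\cl V_{A,G,\alpha}$. Since $\cl E(\pi(b)\lambda_t)=\delta_{t,e}\,b$, we obtain $\cl E(v_j\lambda_s^*)=\sum_k\delta_{t_{j,k},s}\,a_{j,k}$, so
\[
F_\Phi(s)(a) \;=\; \sum_{j,k}\delta_{t_{j,k},s}\, f_j(\pi(a)\lambda_s)\, a_{j,k}.
\]
Thus $F_\Phi(s)$ has rank at most $n$ and vanishes off the finite set $T=\bigcup_{j,k}\{t_{j,k}\}$. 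The formula $S_{F_\Phi}(x)=\sum_{t\in T}\pi(F_\Phi(t)(\cl E(x\lambda_{t^{-1}})))\lambda_t$ then exhibits $S_{F_\Phi}$ as a finite sum of finite-rank maps on $A\rtimes_{\alpha,r}G$.

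The main step is (i). I would invoke Theorem~\ref{th_tra} and show that $\cl N(F_\Phi)$ is a Schur $A$-multiplier with $\|\cl N(F_\Phi)\|_{\rm m}\leq\|\Phi\|_{\rm cb}$. Unpacking with (\ref{eq_pil}) and (\ref{eq_el}) gives $\cl N(F_\Phi)(s,t)(a) = (\Phi(\lambda_t\pi(a)\lambda_{s^{-1}}))_{t,s}$, so for $X=\sum E_{p,q}\otimes x_{p,q}$ the Schur action $S_{\cl N(F_\Phi)}(X)$ has $(p,q)$-entry $(\Phi(\lambda_p\pi(x_{p,q})\lambda_{q^{-1}}))_{p,q}$. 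I would realise this as a composition of four completely bounded maps: (a) the ampliation $\id\otimes\pi:\cl B(\ell^2(G))\otimes_{\min}A\to\cl B(\ell^2(G))\otimes_{\min}(A\rtimes_{\alpha,r}G)$; (b) conjugation by the unitary $U=\sum_p E_{p,p}\otimes\lambda_p$, which sends $\sum E_{p,q}\otimes\pi(x_{p,q})$ to $\sum E_{p,q}\otimes\lambda_p\pi(x_{p,q})\lambda_{q^{-1}}$; (c) $\id\otimes\Phi$; (d) the compression $Y\mapsto W^*YW$, where $W:\ell^2(G)\otimes H\to\ell^2(G)\otimes\ell^2(G)\otimes H$ is the isometry $\delta_p\otimes h\mapsto\delta_p\otimes\delta_p\otimes h$. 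Steps (a), (b), (d) have cb norm $\leq 1$ and (c) contributes $\|\Phi\|_{\rm cb}$. The main obstacle is identifying the "diagonal compression" (d) as a completely bounded map $\cl B(\ell^2(G))\otimes_{\min}(\cl B(\ell^2(G))\otimes_{\min}A)\to\cl B(\ell^2(G))\otimes_{\min}A$ with the correct action, and then checking that the composition genuinely recovers $S_{\cl N(F_\Phi)}$ on the dense span of elementary tensors; once that is in place, Theorem~\ref{th_tra} delivers $F_\Phi\in\frak{S}(A,G,\alpha)$ with $\|F_\Phi\|_{\rm m}\leq\|\Phi\|_{\rm cb}$.
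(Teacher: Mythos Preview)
Your arguments for (ii), (iii), (iv), and (v) are correct and essentially coincide with the paper's; the only quibble is the phrase ``rank at most $n$'' in (iv), which is right once you combine terms so that the $t_{j,k}$ are distinct for each fixed $j$, but in any case finite rank is all that is needed.

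The gap is in (i), and it is not the obstacle you flag. The problem occurs between your steps (b) and (c): conjugation by $U=\sum_p E_{p,p}\otimes\lambda_p$ does \emph{not} carry $\cl B(\ell^2(G))\otimes_{\min}\pi(A)$ into $\cl B(\ell^2(G))\otimes_{\min}(A\rtimes_{\alpha,r}G)$, so $\id\otimes\Phi$ cannot be applied. Concretely, take $A=\bb{C}$ and $G=\bb{Z}$, so that $A\rtimes_{\alpha,r}G\cong C(\bb{T})$ and $\cl B(\ell^2(\bb{Z}))\otimes_{\min}C(\bb{T})\cong C(\bb{T},\cl B(\ell^2(\bb{Z})))$. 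Let $T$ be the flip $\delta_n\mapsto\delta_{-n}$ and $Y=T\otimes I$. Then $(UYU^*)_{p,q}=\delta_{p,-q}\,\lambda_{2p}$, which in the Fourier picture is the function $z\mapsto\sum_p z^{2p}E_{p,-p}$; since $\sup_p|z^{2p}-w^{2p}|$ does not tend to $0$ as $z\to w$, this function is not norm-continuous, so $UYU^*\notin\cl B(\ell^2(\bb{Z}))\otimes_{\min}C_r^*(\bb{Z})$. Restricting to finitely supported matrices does not rescue the argument either, because those are not norm-dense in $\cl B(\ell^2(G))\otimes_{\min}A$, so you cannot deduce the cb estimate on the whole space from the dense subspace without an extra ingredient.

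The paper avoids this by first applying the Haagerup--Paulsen--Wittstock theorem to write $\Phi(x)=W^*\rho(x)V$ with $\rho$ a $*$-representation. The computation (\ref{eq_5}) then gives $\cl N(F_\Phi)(s,t)(a)=\tilde W(t)^*(\rho\circ\pi)(a)\tilde V(s)$ with $\tilde V(s)=\rho(\lambda_s)^*V\lambda_s E_{e,e}$ and similarly for $\tilde W$; this is exactly the form required by the characterisation of Schur $A$-multipliers in \cite[Theorem~2.6]{mtt}, and the norm bound $\|F_\Phi\|_{\rm m}\leq\|V\|\|W\|$ follows from $\sup_s\|\tilde V(s)\|\leq\|V\|$, $\sup_t\|\tilde W(t)\|\leq\|W\|$. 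The point is that passing to a dilation replaces the problematic $\id\otimes\Phi$ by objects ($\rho$, $V$, $W$) that behave well under the group translations, so no tensor-product domain issue arises.
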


\begin{proof}
(i) By the Haagerup-Paulsen-Wittstock Theorem (see e.g. \cite[Theorem 8.4]{pa}),
there exist a Hilbert space $K$, operators $V,W :  H \to K$ and a *-representation
$\rho : A\rtimes_{\alpha,r} G \to \cl B(K)$ such that
\begin{equation}\label{eq_hpw}
\Phi(x) = W^*\rho(x)V, \ \ \ x\in A\rtimes_{\alpha,r} G.
\end{equation}
Using (\ref{eq_el}), we have
\begin{eqnarray}\label{eq_5}
\cl N\left(F_{\Phi}\right)(s,t)(a)
& = &
\alpha_{t^{-1}}\left(F_{\Phi}(ts^{-1})(\alpha_t(a))\right)\\
& = &
\alpha_{t^{-1}}\left(\cl E(\Phi(\pi(\alpha_t(a))\lambda_{ts^{-1}})\lambda_{ts^{-1}}^*)\right)\nonumber\\
& = &
\alpha_{t^{-1}}\left(\cl E(\Phi(\lambda_t\pi(a)\lambda_{t^{-1}}\lambda_{ts^{-1}})\lambda_{ts^{-1}}^*)\right)\nonumber\\
& = &
\cl E\left(\lambda_{t^{-1}} \Phi((\lambda_t\pi(a)\lambda_{s^{-1}})\lambda_s\lambda_{t^{-1}})\lambda_t\right)\nonumber\\
& = &
\cl E\left(\lambda_{t^{-1}} \Phi(\lambda_t\pi(a)\lambda_{s^{-1}})\lambda_s\right)\nonumber\\
& = &
\cl E\left(\lambda_t^* W^* \rho(\lambda_t)\rho(\pi(a))\rho(\lambda_s)^*V\lambda_s\right).\nonumber
\end{eqnarray}
Note that, if $E_{p,q}$ denotes the matrix unit in $\cl B(\ell^2(G,H))$ 
with $I$ at the $(p,q)$-entry, $p,q\in G$, and
zero elsewhere, then $\cl E(x) = E_{e,e} x E_{e,e}$, $x\in A\rtimes_{r,\alpha} G$.
Let 
$\tilde{V}(s) = \rho(\lambda_s)^*V\lambda_s E_{e,e}$ and $\tilde{W}(t) = \rho(\lambda_t)^*W\lambda_t E_{e,e}$,
$s,t\in G$; thus, $\tilde{V}, \tilde{W} : G\to \cl B(H, K)$, and 
\begin{equation}\label{eq_VW}
\sup_{s\in G}\|\tilde{V}(s)\| \sup_{t\in G}\|\tilde{W}(t)\| \leq \|V\|\|W\|.
\end{equation}
By (\ref{eq_5}), 
\begin{equation}\label{eq_Nst}
\cl N(F_{\Phi})(s,t)(a) = \tilde{W}(t)^*(\rho\circ\pi)(a) \tilde{V}(s), \ \ \ a\in A, s,t\in G.
\end{equation}
By \cite[Theorems 2.6 and 3.8]{mtt}, $F_{\Phi}$ is a Herz-Schur $(A,G,\alpha)$-multiplier.
Equations (\ref{eq_VW}),  (\ref{eq_Nst}) and Theorem \ref{th_tra} show that 
$$\|F_{\Phi}\|_{\rm m} = \|S_{\cl N(F_{\Phi})}\|_{\rm cb} \leq \|V\|\|W\|.$$
Taking the infimum over all possible choices of $V$ and $W$ in the representation (\ref{eq_hpw}) of $\Phi$, 
we obtain that $\|F_{\Phi}\|_{\rm m} \leq \|\Phi\|_{\rm cb}$.

(ii) Let $F \in \frak{S}(A,G,\alpha)$. 
The fact that $F_{S_F} = F$ follows from the definition (\ref{eq_dehs}) of $S_F$ and the definition (\ref{eq_hPhi}) of $F_{\Phi}$.

(iii) follows from the definition (\ref{eq_hPhi}) of $F_\Phi$ and the fact that $\cl E$ is completely bounded.

(iv) Since $\Phi$ has finite rank, there exists a finite subset $E\subseteq G$ and 
a finite dimensional subspace $\cl U\subseteq A$, $s\in E$, such that 
$${\rm ran}\Phi \subseteq {\rm span}\{\pi(a)\lambda_s : s\in E, a\in \cl U\}.$$
A direct verification shows that 
\begin{equation*}
{\rm ran}F_{\Phi}(s) \subseteq \left\{ 
\begin{array}{ll}
\cl U & \text{if } s\in E\text{,} \\ 
\{0\} & \text{if } s\not\in E.
\end{array}
\right.
\end{equation*}
Clearly, ${\rm ran}S_{F_\Phi} \subseteq {\rm span}\{\pi(a)\lambda_s : s\in E, a\in \cl U\}.$ 

(v) 
According to (\ref{eq_piapq}),  
$$\left(\Psi(\pi(a)\lambda_s)\lambda_{s^{-1}}\right)_{p,q} 
= 
(S_{\nph}(\pi(a)\lambda_s)_{p,s^{-1}q} 
= 
\begin{cases}
\nph(s^{-1}q,p)(\alpha_{p^{-1}}(a)) & \text{if } p = q\\
0 & \text{if } p \neq q.
\end{cases}
$$
Thus, 
$$F_{\Psi}(s)(a) = \left(\Psi(\pi(a)\lambda_s)\lambda_{s^{-1}}\right)_{e,e} = \nph(s^{-1},e)(a), \ \ \ a\in A.$$
\end{proof}

Proposition \ref{p_FPhi} (ii) and (iii) give the following immediate corollary.

\begin{corollary}\label{c_Fscc}
If $F\in \frak{S}_{\rm cc}(A,G,\alpha)$ then $F(s)$ is completely compact for every $s\in G$;
\end{corollary}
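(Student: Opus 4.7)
The plan is to derive the statement as an immediate consequence by chaining parts (ii) and (iii) of Proposition \ref{p_FPhi}. Given $F\in \frak{S}_{\rm cc}(A,G,\alpha)$, the map $S_F : A\rtimes_{\alpha,r} G \to A\rtimes_{\alpha,r} G$ is completely bounded and, by definition of the subclass $\frak{S}_{\rm cc}$, completely compact.

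First I would observe that $S_F$ fits into the framework of Proposition \ref{p_FPhi}, whose hypothesis requires a completely bounded map into $\cl B(\ell^2(G))\otimes_{\min} A$. This is legitimate because $A\rtimes_{\alpha,r} G$ embeds into $\cl B(\ell^2(G))\otimes_{\min} A$: the generators $\pi(a)$ act as diagonal operators with entries $(\alpha_{s^{-1}}(a))_{s\in G}$, and $\lambda_s = \lambda_s^0\otimes I_H$, so $\pi(a)\lambda_s$ lies in $\cl B(\ell^2(G))\otimes_{\min} A$, and the claim follows by taking norm closure. Hence $F_{S_F}$ is well defined in the sense of (\ref{eq_hPhi}).

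Next, part (ii) of Proposition \ref{p_FPhi} yields $F_{S_F} = F$. Finally, since $S_F$ is completely compact, part (iii) of the same proposition, applied with $\Phi = S_F$, gives that $F_{S_F}(s)\in \ccm(A)$ for every $s\in G$. Combining these two identities produces the desired conclusion that $F(s)$ is completely compact for each $s\in G$.

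Since the work has already been done inside Proposition \ref{p_FPhi}, there is no genuine obstacle here; the only point requiring a moment's care is the embedding $A\rtimes_{\alpha,r} G \hookrightarrow \cl B(\ell^2(G))\otimes_{\min} A$ that lets us apply the proposition to $S_F$. The proof is therefore a two-line deduction.
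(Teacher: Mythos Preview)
Your proposal is correct and follows exactly the paper's own route: the corollary is stated in the paper as an immediate consequence of Proposition~\ref{p_FPhi} (ii) and (iii), and that is precisely the chain you use. The extra care you take in noting that $S_F$ lands in $\cl B(\ell^2(G))\otimes_{\min} A$ so that Proposition~\ref{p_FPhi} applies is reasonable, and is implicit in the paper since part (ii) already invokes $F_{S_F}$.
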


We will call a (possibly vector-valued) function $\nph$ defined on $G\times G$
\emph{band finite} if there exists a finite set $E\subseteq G$ such that
$\nph(s,t) = 0$ if $ts^{-1}\not\in E$.
Let
$V : \ell^2(G)\otimes H\to\ell^2(G)\otimes\ell^2(G)\otimes H$
be the isometry given by 
$$V(\delta_s\otimes \xi) = \delta_s\otimes\delta_s\otimes \xi, \ \ \ s\in G, \xi\in H,$$
and
$\tau : A\rtimes_{r,\alpha}G \to C_r^*(G)\otimes_{\rm min} (A\rtimes_{r,\alpha}G)$ be the  dual co-action   to $\alpha$, 
that is, the *-homomorphism, given by
$$\tau\left(\pi(a)\lambda_t\right) = \lambda_t^0\otimes\pi(a)\lambda_t, \ \ \ a\in A, t\in G,$$
(see e.g. \cite{imai_takai}). 

\begin{lemma}\label{l_V}
The following hold:
\begin{itemize}
\item[(i)] $V^*\tau(x)V = x$, $x\in A\rtimes_{r,\alpha}G$;
\item[(ii)] If $\Phi\in {\rm CB}(A\rtimes_{r,\alpha}G)$ then
\begin{equation}\label{eq_Snph}
S_{F_{\Phi}}(x) = V^*(\id\otimes\Phi)(\tau(x))V, \ \ \  x\in A\rtimes_{r,\alpha}G.
\end{equation}
\end{itemize}
\end{lemma}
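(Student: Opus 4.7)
The plan is to verify both identities on the dense spanning set $\{\pi(a)\lambda_t:a\in A,\,t\in G\}$ of $A\rtimes_{\alpha,r}G$, using the fact that all operators appearing in the two formulas are norm continuous (and in fact completely bounded), so the identities then extend by linearity and continuity.

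For (i), I would apply $V^{\ast}\tau(\pi(a)\lambda_t)V$ to a simple tensor $\delta_s\otimes\xi$. First, $V(\delta_s\otimes\xi)=\delta_s\otimes\delta_s\otimes\xi$. Since $\tau(\pi(a)\lambda_t)=\lambda_t^{0}\otimes\pi(a)\lambda_t$, and since both $\lambda_t^{0}$ and $\lambda_t$ shift the $G$-coordinate by $t$, the vector becomes $\delta_{ts}\otimes\delta_{ts}\otimes\alpha_{(ts)^{-1}}(a)\xi$. Observe that $V^{\ast}$ acts on simple tensors by $\delta_p\otimes\delta_q\otimes\eta\mapsto\delta_{p,q}\delta_p\otimes\eta$, i.e.\ it is essentially the projection onto the diagonal of $\ell^{2}(G\times G)$ composed with the obvious identification. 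Applying it therefore returns $\delta_{ts}\otimes\alpha_{(ts)^{-1}}(a)\xi$, which is precisely $\pi(a)\lambda_t(\delta_s\otimes\xi)$, proving (i) on the dense subalgebra $\cl V_{A,G,\alpha}$.

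For (ii), set $y:=\Phi(\pi(a)\lambda_t)\in A\rtimes_{\alpha,r}G$. Then $(\id\otimes\Phi)(\tau(\pi(a)\lambda_t))=\lambda_t^{0}\otimes y$, and applying the same calculation to $V(\delta_q\otimes\xi)=\delta_q\otimes\delta_q\otimes\xi$ together with the matrix expansion $y(\delta_q\otimes\xi)=\sum_{p}\delta_p\otimes y_{p,q}\xi$ yields
\[
V^{\ast}(\lambda_t^{0}\otimes y)V(\delta_q\otimes\xi)=\delta_{tq}\otimes y_{tq,q}\xi,
\]
so the operator $V^{\ast}(\id\otimes\Phi)(\tau(\pi(a)\lambda_t))V$ has matrix entries $\delta_{p,tq}\,y_{p,q}$ at position $(p,q)$. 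On the other hand, by definition $S_{F_\Phi}(\pi(a)\lambda_t)=\pi(F_\Phi(t)(a))\lambda_t$, whose $(p,q)$-entry by (\ref{eq_piapq}) equals $\delta_{pq^{-1},t}\,\alpha_{p^{-1}}(F_\Phi(t)(a))$. Combining the definition (\ref{eq_hPhi}) with the fact that $\cl E(y\lambda_t^{\ast})=y_{e,t^{-1}}$ gives $F_\Phi(t)(a)=y_{e,t^{-1}}$, and the translation formula (\ref{translationformula}) applied to $y\in A\rtimes_{\alpha,r}G$ gives $\alpha_{p^{-1}}(y_{e,t^{-1}})=y_{p,q}$ whenever $p=tq$. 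The two expressions therefore coincide.

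The only obstacle here is careful bookkeeping of matrix entries, shifts, and covariance of $\alpha$; no conceptual difficulty arises. The passage to all $x\in A\rtimes_{\alpha,r}G$ is immediate once equality is established on $\cl V_{A,G,\alpha}$, because $\tau$ is a $\ast$-homomorphism (hence completely contractive), $\id\otimes\Phi$ is completely bounded on $\cl B(\ell^{2}(G))\otimes_{\min}(A\rtimes_{\alpha,r}G)$, compression by $V$ is completely bounded, and $S_{F_\Phi}$ is completely bounded by Proposition \ref{p_FPhi}(i).
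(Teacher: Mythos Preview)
Your proposal is correct and follows essentially the same approach as the paper: both verify the identities on generators $\pi(a)\lambda_t$ via a direct matrix-entry/inner-product computation, invoking the translation formula (\ref{translationformula}) (equivalently (\ref{eq_T})) to identify $y_{p,q}$ with $\alpha_{p^{-1}}(F_\Phi(t)(a))$ when $p=tq$, and then extend by linearity and continuity. The only difference is cosmetic---you work with vectors and matrix entries directly, while the paper phrases the same computation through inner products $\langle\cdot,\delta_r\otimes\eta\rangle$.
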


\begin{proof}
(i) 
If $a\in A$, $s,r,t\in G$ and $\xi,\eta\in H$ then 
\begin{eqnarray*}
& & 
\left\langle V^*\tau(\pi(a)\lambda_t)V(\delta_s\otimes\xi),\delta_r\otimes\eta\right\rangle\\
& = & 
\left\langle (\lambda_t\otimes \pi(a)\lambda_t)(\delta_s\otimes\delta_s\otimes\xi),\delta_r\otimes\delta_r\otimes\eta\right\rangle\\
& = & 
\left\langle \delta_{ts}\otimes \pi(a)\lambda_t(\delta_s\otimes\xi),\delta_r\otimes\delta_r\otimes\eta\right\rangle
= 
\left\langle \pi(a)\lambda_t(\delta_s\otimes\xi),\delta_r\otimes\eta\right\rangle,
\end{eqnarray*}
by (\ref{eq_piapq}).

(ii)
Let $a\in A$, $s,r,t\in G$ and $\xi,\eta\in H$. Using (\ref{eq_piapq}) and (\ref{eq_T}), we have
\begin{eqnarray*}
& & 
\langle V^*(\id\otimes\Phi)(\tau(\pi(a)\lambda_t))V(\delta_s\otimes\xi),\delta_r\otimes\eta\rangle\\
& = & 
\langle (\id\otimes\Phi)(\lambda_t^0 \otimes \pi(a)\lambda_t)(\delta_s\otimes \delta_s\otimes\xi),\delta_r\otimes \delta_r\otimes\eta\rangle\\
& = & 
\langle \left(\lambda_t^0\otimes \Phi(\pi(a)\lambda_t)\right)(\delta_s\otimes \delta_s\otimes\xi),\delta_r\otimes \delta_r\otimes\eta\rangle\\
& = & 
\langle \delta_{ts}\otimes \Phi(\pi(a)\lambda_t)(\delta_s\otimes\xi),\delta_r\otimes \delta_r\otimes\eta\rangle\\
& = & 
\left\{ 
\begin{array}{ll}
0 & \text{if } ts\neq r\text{,} \\ 
\langle \Phi(\pi(a)\lambda_t)(\delta_s\otimes\xi),\delta_r\otimes\eta\rangle & \text{if } ts = r.
\end{array}
\right.\\
& = & 
\left\{ 
\begin{array}{ll}
0 & \text{if } ts\neq r\text{,} \\ 
\left\langle \alpha_{(ts)^{-1}}\left(\cl E(\Phi(\pi(a)\lambda_t)\lambda_t^*)\right)\xi,\eta\right\rangle   & \text{if } ts = r.
\end{array}
\right.\\
& = & 
\left\langle \pi\left(\cl E(\Phi(\pi(a)\lambda_t)\lambda_t^*)\right)(\delta_{ts}\otimes\xi),\delta_r\otimes\eta\right\rangle\\
& = & 
\left\langle S_{F_{\Phi}}\left(\pi(a)\lambda_t\right)(\delta_s\otimes\xi),\delta_r\otimes\eta\right\rangle.
\end{eqnarray*}
\end{proof}

\begin{lemma}\label{soat}
Let $G$ be a discrete group and 
$(A,G,\alpha)$ be a C*-dynamical system such that $A\rtimes_{r,\alpha}G$ has the SOAP. 
Then there exists a net $(F_i)_{i\in \bb{I}}$ of finitely supported Herz-Schur $(A,G,\alpha)$-multipliers 
with $F_i(s)\in {\rm F}(A)$, $i\in \bb{I}$, $s\in G$, such that 
$(S_{F_i})_{i\in \bb{I}}$ converges to the identity map on 
$A\rtimes_{r,\alpha}G$ in the strongly stable point norm topology.
\end{lemma}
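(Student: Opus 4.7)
The plan is to exploit the SOAP of $A\rtimes_{r,\alpha}G$ to produce a net of finite rank completely bounded maps approximating the identity, perturb them so that their ranges lie in the dense subalgebra $\cl V_{A,G,\alpha}$, and then transfer the approximation to the associated Herz-Schur multipliers via the assignment $\Phi\mapsto F_\Phi$ of Proposition \ref{p_FPhi} combined with the formula of Lemma \ref{l_V}(ii).

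By SOAP, fix a net $(\Psi_j)_{j\in\bb{J}}\subseteq\frm(A\rtimes_{r,\alpha}G)$ with $\Psi_j\to\id$ in the strongly stable point norm topology. Write $\Psi_j=\sum_{k=1}^{n_j}\omega_{j,k}(\cdot)\,y_{j,k}$ with $\omega_{j,k}\in(A\rtimes_{r,\alpha}G)^*$ and $y_{j,k}\in A\rtimes_{r,\alpha}G$. For each $\epsilon>0$, using the density of $\cl V_{A,G,\alpha}$ in $A\rtimes_{r,\alpha}G$ together with the identity $\|\omega(\cdot) y\|_{\cb}=\|\omega\|\,\|y\|$ for rank-one maps, I would pick $y_{j,k}'\in\cl V_{A,G,\alpha}$ close enough to $y_{j,k}$ that $\Phi_{j,\epsilon}:=\sum_{k}\omega_{j,k}(\cdot)\,y_{j,k}'$ has finite rank with range contained in $\cl V_{A,G,\alpha}$ and satisfies $\|\Phi_{j,\epsilon}-\Psi_j\|_{\cb}<\epsilon$. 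Setting $F_{j,\epsilon}:=F_{\Phi_{j,\epsilon}}$, Proposition \ref{p_FPhi}(iv) (together with the explicit description of the ranges of $F_\Phi$ in its proof) ensures that $F_{j,\epsilon}$ is finitely supported and that $F_{j,\epsilon}(s)\in\frm(A)$ for every $s\in G$.

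Index the family by $\bb{I}:=\bb{J}\times(0,\infty)$, endowed with the product directed order (with $(0,\infty)$ ordered by $\geq$). The convergence $S_{F_{j,\epsilon}}\to\id$ in the strongly stable point norm topology then follows from Lemma \ref{l_V}(ii). Indeed, for a Hilbert space $K$ and $z\in\cl B(K)\otimes_{\min}(A\rtimes_{r,\alpha}G)$, amplifying the identity in Lemma \ref{l_V}(ii) by $\id_K$ yields
\begin{equation*}
(\id_K\otimes S_{F_{j,\epsilon}})(z)-z=(\id_K\otimes V^*)\bigl[(\id_{K\otimes\ell^2(G)}\otimes\Phi_{j,\epsilon})(w)-w\bigr](\id_K\otimes V),
\end{equation*}
where $w:=(\id_K\otimes\tau)(z)\in\cl B(K\otimes\ell^2(G))\otimes_{\min}(A\rtimes_{r,\alpha}G)$. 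Since $V$ is an isometry,
\begin{equation*}
\|(\id_K\otimes S_{F_{j,\epsilon}})(z)-z\|\leq\|(\id_{K\otimes\ell^2(G)}\otimes\Psi_j)(w)-w\|+\epsilon\,\|w\|.
\end{equation*}
The first term vanishes along $\bb{J}$ by SOAP applied on the Hilbert space $K\otimes\ell^2(G)$, and the second vanishes with $\epsilon\to 0$; together these deliver the required convergence.

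The main obstacle is the simultaneous management of two parameters: the SOAP index $j$, which provides only crude approximation with range in an \emph{arbitrary} finite dimensional subspace, and the perturbation parameter $\epsilon$, which is needed to force that range into $\cl V_{A,G,\alpha}$ so that Proposition \ref{p_FPhi}(iv) can be invoked. The point is that the cb estimate $\|\Phi_{j,\epsilon}-\Psi_j\|_{\cb}<\epsilon$ dominates the strongly stable point norm discrepancy uniformly in the test element, which is what allows the two parameters to be decoupled and recombined into a single product net.
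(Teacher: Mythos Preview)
Your proof is correct and follows essentially the same approach as the paper: obtain finite rank SOAP approximants, perturb their ranges into $\cl V_{A,G,\alpha}$ (the paper cites \cite{mckee} for this step while you spell out the rank-one argument), apply Proposition \ref{p_FPhi}(iv) to the associated $F_\Phi$, and transfer the strongly stable point norm convergence via Lemma \ref{l_V} over the product directed set. Your final estimate makes the triangle inequality step slightly more explicit than the paper does, but the argument is otherwise identical.
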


\begin{proof}
Write $\mathfrak A = A\rtimes_{r,\alpha}G$ and let
$(\Phi_i)_{i\in \bb{I}} \subseteq {\rm F}(\mathfrak A)$ be a net such that 
$$\|(\id\otimes \Phi_i)(x) - x\|\to_{i\in \bb{I}} 0, \ \ \  x\in \cl B(\ell^2)\otimes_{\rm min}\mathfrak A.$$
Following the proof of \cite[Theorem 4.3]{mckee}, 
given $\varepsilon > 0$, for each $i\in \bb{I}$ there exists 
a finite rank map $\Phi_{i,\varepsilon}\in {\rm CB}(\mathfrak A)$ whose range lies in $\cl V_{A,G,\alpha}$, such that
$$\|\Phi_i-\Phi_{i,\varepsilon}\|_{\rm cb} < \varepsilon, \ \  \ i\in \bb{I}.$$
Hence
$$\|(\id\otimes \Phi_{i,\varepsilon})(x) - x\| \to_{(i,\epsilon)} 0, \ \ \  x\in\cl B(H)\otimes_{\rm min}\mathfrak A,$$
along the product directed set.

The function $F_{i,\varepsilon} := F_{\Phi_{i,\varepsilon}}$ is finitely supported. By Proposition \ref{p_FPhi} (i) and (iv), 
$F_{i,\varepsilon}$ is a Herz-Schur multiplier and  
$F_{i,\varepsilon}(s)\in {\rm F}(A)$, $s\in G$, $i\in \bb{I}$. 
If $y\in \cl B(H)\otimes_{\rm min} \mathfrak A$, 
using Lemma \ref{l_V} we have 
\begin{eqnarray*}
\|\id\otimes S_{F_{i,\varepsilon}}(y) - y\|
& = &
\|(\id\otimes V^*)(\id\otimes\id\otimes\Phi_{i,\varepsilon}(\id\otimes\tau(y))(\id\otimes V)\\&&- (\id\otimes V^*)(\id\otimes\tau(y)(\id\otimes V)\|\\
&\leq &\|(\id\otimes \id\otimes\Phi_{i,\varepsilon})((\id\otimes\tau)(y)) - (\id\otimes\tau)(y)\|
\to_{(i,\epsilon)} 0,
\end{eqnarray*}
that is, $S_{F_{i,\varepsilon}}$ converges to the identity map in the strongly stable point norm topology.
\end{proof}


\begin{theorem}\label{th_ccc}
Let $F$ be a Herz-Schur multiplier of $(A,G,\alpha)$. 
Assume that $A\rtimes_{r,\alpha}G$ possesses the SOAP. 
The following are equivalent:
\begin{itemize}
\item[(i)] the map $S_F$ is completely compact;

\item[(ii)] 
there exist 
a net $(F_i)_{i\in \bb{I}}\subseteq \mathfrak S(A,G,\alpha)$ and a finite dimensional subspace $\cl U_{i}\subseteq A$, 
such that $F_i$ is finitely supported, $\ran F_i(s)\subseteq \cl U_i$
for each $i\in \bb{I}$ and each $s\in G$, 
and $\|F_i - F\|_{\rm m}\to_{i\in \bb{I}} 0$;

\item[(iii)] there exists a net $(\nph_i)_{i\in \bb{I}}$ of band finite Schur $A$-multipliers
and a finite dimensional subspace $\cl U_{i}\subseteq A$, $i\in \bb{I}$,
such that
$\ran \alpha_s \circ \nph_i(s,e) \subseteq \cl U_{i}$ for all $i\in \bb{I}$ and all $s\in G$,
and
$\|S_{\cl N(F)} - S_{\nph_i}\|_{\cb} \to_{i\in \bb{I}} 0$.
\end{itemize}
\end{theorem}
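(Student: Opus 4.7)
The plan is to prove (ii) $\Rightarrow$ (i) and (i) $\Rightarrow$ (ii) directly, and then obtain (ii) $\Leftrightarrow$ (iii) through the isometric transfer $\cl N$ of Theorem \ref{th_tra}.

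For (ii) $\Rightarrow$ (i): when $F_i$ is finitely supported with $\ran F_i(s) \subseteq \cl U_i$ for every $s\in G$, the map $S_{F_i}$ has range contained in the finite-dimensional space ${\rm span}\{\pi(a)\lambda_s : s\in\supp F_i, a\in\cl U_i\}$, hence is of finite rank, and in particular completely compact. A standard argument from the definition (\ref{eq_psicc}) shows that $\ccm(A\rtimes_{\alpha,r}G)$ is cb-norm closed in $\cbm(A\rtimes_{\alpha,r}G)$; combined with $\|S_{F_i}-S_F\|_\cb = \|F_i-F\|_{\rm m}\to 0$, this yields $S_F\in\ccm(A\rtimes_{\alpha,r}G)$.

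For (i) $\Rightarrow$ (ii) I would invoke Lemma \ref{soat} (here the SOAP assumption is essential) to obtain a net $(F_i)$ of finitely supported Herz-Schur multipliers with $F_i(s)\in{\rm F}(A)$ such that $S_{F_i}\to\id$ in the strongly stable point-norm topology, and then apply Lemma \ref{l_soat} with $\Phi_i=S_{F_i}$, $\Phi=\id$, $\Psi=S_F$ to conclude $\|S_{F_i}\circ S_F - S_F\|_\cb\to 0$. A short computation on the generators $\pi(a)\lambda_s$, using $S_F(\pi(a)\lambda_s)=\pi(F(s)(a))\lambda_s$, shows that $S_{F_i}\circ S_F = S_{G_i}$, where $G_i(s) := F_i(s)\circ F(s)$; this $G_i$ is a Herz-Schur multiplier (composition of completely bounded maps), is supported in $\supp F_i$, and has $\ran G_i(s)\subseteq\ran F_i(s)\subseteq\cl U_i$, with $\cl U_i := \sum_{t\in\supp F_i}\ran F_i(t)$ finite-dimensional, so that $(G_i)$ witnesses (ii).

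For (ii) $\Rightarrow$ (iii), set $\varphi_i := \cl N(F_i)$; the formula $\cl N(F_i)(s,t) = \alpha_{t^{-1}}\circ F_i(ts^{-1})\circ\alpha_t$ shows that $\varphi_i$ is band finite with band set $\supp F_i$, and that $\alpha_s\circ\varphi_i(s,e) = \alpha_s\circ F_i(s^{-1})$ has range in $\alpha_s(\cl U_i)$, hence in the finite-dimensional subspace $\tilde{\cl U}_i := {\rm span}\{\alpha_s(u) : s\in(\supp F_i)^{-1},\, u\in\cl U_i\}$; the required cb-convergence is $\|S_{\cl N(F_i)}-S_{\cl N(F)}\|_\cb = \|F_i-F\|_{\rm m}\to 0$ by Theorem \ref{th_tra}. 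Conversely, for (iii) $\Rightarrow$ (ii) I would put $\Psi_i := S_{\varphi_i}|_{A\rtimes_{\alpha,r}G}$ and $G_i := F_{\Psi_i}$; by Proposition \ref{p_FPhi}(v), $G_i(s) = \varphi_i(s^{-1},e)$, so $G_i$ is finitely supported with $\ran G_i(s)\subseteq\alpha_s(\cl U_i)$, while linearity of $\Phi\mapsto F_\Phi$, part (ii) of Proposition \ref{p_FPhi} giving $F_{S_F}=F$, part (i) of the same proposition, and the identity $S_F = S_{\cl N(F)}|_{A\rtimes_{\alpha,r}G}$ give
\[
\|G_i - F\|_{\rm m} = \|F_{\Psi_i - S_F}\|_{\rm m} \leq \|\Psi_i - S_F\|_\cb \leq \|S_{\varphi_i}-S_{\cl N(F)}\|_\cb \to 0.
\]

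The main technical step is (i) $\Rightarrow$ (ii), where both the SOAP hypothesis (through Lemma \ref{soat}) and the complete compactness of $S_F$ (through Lemma \ref{l_soat}) enter in an essential way, and the key observation is that $S_{F_i}\circ S_F$ is itself a Herz-Schur multiplier whose symbol inherits finite support and finite-dimensional range from $F_i$. The remaining steps reduce to routine bookkeeping around the transfer $\cl N$ and the assignment $\Phi\mapsto F_\Phi$; some care is needed to track how ranges twist under $\alpha_s$ in the passage (ii) $\Leftrightarrow$ (iii), but the correct finite-dimensional enlargements are dictated by the formulas in Proposition \ref{p_FPhi}.
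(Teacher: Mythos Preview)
Your proposal is correct and follows essentially the same approach as the paper: the implication (i)$\Rightarrow$(ii) via Lemmas~\ref{soat} and~\ref{l_soat} with $G_i(s)=F_i(s)\circ F(s)$ is exactly what the paper does (it writes this composition as $\tilde F_i\circ F$), and your treatment of (ii)$\Leftrightarrow$(iii) through $\cl N$ and $\Phi\mapsto F_\Phi$ matches the paper's (ii)$\Rightarrow$(iii)$\Rightarrow$(i). The only organisational difference is that the paper closes a cycle (i)$\Rightarrow$(ii)$\Rightarrow$(iii)$\Rightarrow$(i), whereas you prove (ii)$\Leftrightarrow$(i) and (ii)$\Leftrightarrow$(iii) separately; your (iii)$\Rightarrow$(ii) should, as in the paper's (iii)$\Rightarrow$(i), take the finite span $\cl V_i=\mathrm{span}\bigcup_{s\in\supp G_i}\alpha_s(\cl U_i)$ to obtain a single subspace independent of $s$.
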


\begin{proof}
(i)$\Rightarrow$(ii) 
Let $(\tilde{F}_i)_{i\in \bb{I}}$ be a net as in Lemma \ref{soat} 
and $F_i = \tilde{F}_i \circ F$, $i\in \bb{I}$. 
By Lemma  \ref{l_soat}, $(F_i)_{i\in \bb{I}}$ satisfies the conditions of (ii).

(ii)$\Rightarrow$(iii) Set $\nph_i = \cl N(F_i)$; 
by Theorem \ref{th_tra}, $\nph_i$ is a Schur $A$-multiplier, $i\in \bb{I}$. 
Since $F_i$ is finitely supported, $\nph_i$ is band finite. 
Moreover, 
$$\nph_i(s,t)(a) = \alpha_{t^{-1}} (F_i(ts^{-1})(\alpha_t(a))), \ \ \ a\in A,$$
and hence
$$\ran \alpha_s\circ \nph_i(s,e) \subseteq \ran F_i(s^{-1}) \subseteq \cl U_i, \ \ \ s\in G.$$

(iii)$\Rightarrow$(i)
Let $\Phi_i$ 
be the restriction of the map $S_{\nph_i}$ to $A\rtimes_{r,\alpha} G$, $i\in \bb{I}$.
By Proposition \ref{p_FPhi} (i) and (ii), 
$\|F - F_{\Phi_i}\|_{\rm m} \to_{i\in \bb{I}} 0$.
Since $\nph_i$ is band finite, $F_i:=F_{\Phi_i}$ is supported on a finite set, say $E_i\subseteq G$. By Proposition \ref{p_FPhi}  (v), 
$$\ran F_i(s)\subseteq \ran \nph_i(s^{-1},e) \subseteq \alpha_s(\cl U_i), \ \ \ s\in G.$$ 
Let $\cl V_i = {\rm span}\left(\cup_{s\in E_i} \alpha_s(\cl U_i)\right)$; then $\cl V_i$ is finite dimensional 
and 
$$\ran S_{F_i}\subseteq {\rm span}\{\pi(a)\lambda_s : a\in \cl V_i, s\in E_i\}.$$
Thus, $S_{F_i}$ has finite rank, $i\in \bb{I}$. 
Since the set $\ccm(A\rtimes_{r,\alpha} G)$ is closed in the completely bounded norm,
$S_F\in \frak{S}_{\rm cc}(A\rtimes_{r,\alpha} G)$.
\end{proof}

\begin{corollary}\label{c_finap}
If $A\rtimes_{r,\alpha}G$ possesses the SOAP then
every completely compact Herz-Schur $(A,G,\alpha)$-multiplier is a limit of  
$(A,G,\alpha)$-multipliers of finite rank. 
\end{corollary}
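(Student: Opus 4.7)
The plan is to read this corollary off directly from Theorem \ref{th_ccc}, augmented by the simple observation that a finitely supported Herz-Schur multiplier whose pointwise values land in a common finite-dimensional subspace of $A$ automatically gives rise to a map of finite rank. So the corollary is essentially a repackaging of clause (ii) of the theorem.

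More precisely, given a completely compact Herz-Schur $(A,G,\alpha)$-multiplier $F$, I would invoke the implication (i)$\Rightarrow$(ii) of Theorem \ref{th_ccc} to obtain a net $(F_i)_{i\in \bb{I}}\subseteq \frak{S}(A,G,\alpha)$ together with finite-dimensional subspaces $\cl U_i\subseteq A$ and finite sets $E_i\subseteq G$ such that each $F_i$ is supported inside $E_i$, $\ran F_i(s)\subseteq \cl U_i$ for every $s\in G$, and $\|F_i-F\|_{\rm m}\to_{i\in \bb{I}} 0$. Since $\|F_i-F\|_{\rm m}=\|S_{F_i}-S_F\|_{\cb}$, convergence in the multiplier norm is identical with convergence of the associated maps in the completely bounded norm, so the only thing left to verify is that each $S_{F_i}$ is a finite-rank operator on $A\rtimes_{\alpha,r}G$.

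For that step, I would use the defining formula (\ref{eq_dehs}): on the dense subalgebra $\cl V_{A,G,\alpha}$ one has $S_{F_i}(\pi(a)\lambda_s)=\pi(F_i(s)(a))\lambda_s$, which vanishes whenever $s\notin E_i$ and otherwise lies in $\pi(\cl U_i)\lambda_s$. Therefore
$$\ran S_{F_i}\bigl|_{\cl V_{A,G,\alpha}}\subseteq \mathrm{span}\bigl\{\pi(u)\lambda_s : u\in \cl U_i,\ s\in E_i\bigr\},$$
a finite-dimensional subspace of $A\rtimes_{\alpha,r}G$; by continuity of $S_{F_i}$ and density of $\cl V_{A,G,\alpha}$, the same containment holds on the whole crossed product, giving $S_{F_i}\in \frm(A\rtimes_{\alpha,r}G)$.

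I do not anticipate any real obstacle here. All the technical work (use of the SOAP on $A\rtimes_{\alpha,r}G$, the averaging argument producing the approximating multipliers, and the passage via $\cl N$ between Herz-Schur and Schur multipliers) has already been carried out in Lemma \ref{soat} and Theorem \ref{th_ccc}; the corollary simply translates the conclusion of (ii) into the language of finite-rank Herz-Schur multipliers.
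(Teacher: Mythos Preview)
Your proposal is correct and is exactly the intended argument: the paper states this corollary without proof, treating it as immediate from Theorem \ref{th_ccc}, and your reading of (i)$\Rightarrow$(ii) together with the finite-rank verification for each $S_{F_i}$ is precisely how one extracts the statement. The only minor comment is that the finite-rank observation you spell out is already implicit in the paper's proof of (iii)$\Rightarrow$(i) of Theorem \ref{th_ccc} (in slightly different notation), so no new ingredient is needed.
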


\noindent {\bf Remark.} By \cite[Theorem 5.9]{webster},  any completely compact map on a $C^*$-algebra with SOAP is a limit of finite rank maps.  Corollary \ref{c_finap} is a refinement of this statement. 
\medskip

Let $A_{\rm cb}(G)$ be the closure of 
$A(G)$ within $M^{\rm cb}A(G)$ with respect to $\|\cdot\|_{\rm m}$.  
The algebra was first introduced and studied in \cite{forrest}.  
It is a regular commutative Tauberian Banach algebra whose 
Gelfand space can be canonically identified with $G$ (see \cite[Proposition 2.2]{frs}).

It is known that $M^{\cb} A(G)$ is a dual Banach space \cite[Proposition 1.10]{ch}. 
Recall that $G$ has the {\it approximation property (AP)} \cite{haagerup-kraus}
if there exists a net $(u_i)_{i\in \bb{I}}$ of finitely supported Herz-Schur multipliers such that 
$u_i \to_{i\in \bb{I}} 1$ in the weak* topology of $M^{\rm cb}A(G)$. 
By \cite[Theorem 1.9]{haagerup-kraus} (see also \cite[Theorem 12.4.9]{brown-ozawa}, $G$ has (AP) if and only if $C_r^*(G)$ has SOAP.
We note that, by \cite[Theorem 3.6]{suzuki}, $A\rtimes_{r,\alpha} G$ has the SOAP if $A$ has the SOAP and $G$ has the AP. The SOAP for $A\rtimes_{r,\alpha} G$ was also  recently studied in \cite{mckee_turowska}.

Theorem \ref{th_ccc} has  the following immediate consequence.

\begin{corollary}\label{th_ccg}
Let $G$ be a discrete group possessing property (AP) and $u : G\to \bb{C}$ be a Herz-Schur multiplier.
The following are equivalent:
\begin{itemize}
\item[(i)] the map $S_u : C^*_r(G) \to C^*_r(G)$ is completely compact;

\item[(ii)] there exists a net $(u_i)_{i\in \bb{I}}$ of finitely supported elements of $A(G)$ such that
$\|S_u - S_{u_i}\|_{\cb}\to_{i\in \bb{I}} 0$;

\item[(iii)] there exists a net $(\nph_i)_{i\in \bb{N}}$ of band finite Schur multipliers
such that $\|S_{N(u)} - S_{\nph_i}\| \to_{i\in \bb{I}} 0$;

\item[(iv)] $u\in A_{\rm cb}(G)$.
\end{itemize}
\end{corollary}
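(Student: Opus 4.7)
The plan is to specialise Theorem \ref{th_ccc} to the trivial C*-dynamical system $(\bb{C},G,\mathrm{id})$. In that case $A\rtimes_{r,\mathrm{id}} G = C^*_r(G)$ and a Herz-Schur $(\bb{C},G,\mathrm{id})$-multiplier $F$ is nothing but the scalar function $u(s) = F(s)(1)$; likewise $\cl N$ reduces to $N$ and a Schur $\bb{C}$-multiplier is a Schur multiplier in the usual sense. By the Haagerup--Kraus equivalence cited in the paragraph immediately preceding the statement, property (AP) for $G$ yields the SOAP for $C^*_r(G)$, so the hypothesis of Theorem \ref{th_ccc} is in force.

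The equivalences (i) $\Leftrightarrow$ (ii) and (i) $\Leftrightarrow$ (iii) are then obtained by reading off the corresponding equivalences in Theorem \ref{th_ccc}. Since $A = \bb{C}$, any finite-dimensional subspace $\cl U_i \subseteq \bb{C}$ is either $\{0\}$ or $\bb{C}$, so the range conditions on $F_i(s)$ and on $\alpha_s\circ\nph_i(s,e)$ are automatic. Moreover, because $G$ is discrete, every finitely supported scalar Herz-Schur multiplier $u_i$ automatically lies in $A(G)$: it is a finite linear combination of Dirac functions $\delta_s$, and each $\delta_s$ is the matrix coefficient $t\mapsto \langle\lambda^0_t\delta_e,\delta_s\rangle$ of the left regular representation. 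This identifies the finitely supported Herz-Schur multipliers provided by Theorem \ref{th_ccc}(ii) with the finitely supported elements of $A(G)$ appearing in (ii).

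It remains to prove (ii) $\Leftrightarrow$ (iv). The implication (ii) $\Rightarrow$ (iv) is immediate, as the approximants $u_i$ lie in $A(G)$ and converge to $u$ in $\|\cdot\|_{\mathrm{m}}$, placing $u$ in the $\|\cdot\|_{\mathrm{m}}$-closure $A_{\mathrm{cb}}(G)$. For (iv) $\Rightarrow$ (ii), given $u\in A_{\mathrm{cb}}(G)$ and $\varepsilon > 0$, I would first choose $v \in A(G)$ with $\|u - v\|_{\mathrm{m}} < \varepsilon/2$, and then approximate $v$ within $\varepsilon/2$ in the \emph{$A(G)$-norm} by a finitely supported $w \in A(G)$; since $\|\cdot\|_{\mathrm{m}} \leq \|\cdot\|_{A(G)}$, this gives $\|u - w\|_{\mathrm{m}} < \varepsilon$. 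The density of finitely supported functions in $A(G)$ for discrete $G$ is standard: writing $v(s) = \langle\lambda^0_s\xi,\eta\rangle$ with $\xi,\eta \in \ell^2(G)$ and truncating $\xi,\eta$ to finitely supported vectors yields a finitely supported element of $A(G)$ close to $v$ in the $A(G)$-norm.

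The argument is essentially bookkeeping: matching the scalar-valued setting to the statements in Theorem \ref{th_ccc} and invoking the density of $c_{00}(G)$ in $A(G)$. There is no genuine obstacle beyond these routine verifications, the only mild point being the step (iv) $\Rightarrow$ (ii), where one must bridge $A(G)$-approximation and $\|\cdot\|_{\mathrm{m}}$-approximation via the inequality $\|\cdot\|_{\mathrm{m}} \leq \|\cdot\|_{A(G)}$.
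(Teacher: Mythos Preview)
Your proposal is correct and follows essentially the same approach as the paper: the equivalences (i)$\Leftrightarrow$(ii)$\Leftrightarrow$(iii) are obtained by specialising Theorem~\ref{th_ccc} to the trivial system $(\bb{C},G,\mathrm{id})$ (with AP giving SOAP for $C^*_r(G)$), and (ii)$\Leftrightarrow$(iv) follows from the density of finitely supported functions in $A(G)$ together with $\|\cdot\|_{\mathrm m}\leq\|\cdot\|_{A(G)}$. The paper's proof is more terse but structurally identical.
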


\begin{proof}
The equivalences (i)$\Leftrightarrow$(ii)$\Leftrightarrow$(iii) follow from Theorem \ref{th_ccc}.
The equivalence (ii)$\Leftrightarrow $(iv) follows from the facts that
the finitely supported functions in $A(G)$ form a dense set in $A(G)$
and $\|S_u\|_{\rm cb} = \|u\|_{\rm m}$.
\end{proof}

\begin{remark}\label{r_MAG}
Let $G$ be a discrete group possessing (AP) and $u\in MA(G)$. 
The map $S_u$ is compact if and only if $u\in A_M(G):=\overline{A(G)}^{\|\cdot\|_{MA(G)}}$. 
\end{remark}

\begin{proof}
By \cite[Theorem 1.9]{haagerup-kraus}, there exists a net
$(u_i)_{i\in \bb{I}}$ consisting of finitely supported functions such that 
$S_{u_i}(x) \to_{i\in \bb{I}} x$, $x\in C^*_r(G)$. 
If $S_u$ is compact then $S_{u_i} S_{u}\to_{i\in \bb{I}} S_u$ in norm; thus, 
$\|u - u u_i\|_{MA(G)} \to_{i\in \bb{I}} 0$. 

Conversely, suppose that $u\in A_M(G)$. 
Since $\|\cdot\|_{MA(G)} \leq \|\cdot\|_{A(G)}$, 
there exists a net $(u_i)_{i\in \bb{I}}$ of finitely supported functions such that 
$\|u_i - u\|_{MA(G)} \to_{i\in \bb{I}} 0$. 
Thus, $\|S_{u_i} - S_u\| \to_{i\in \bb{I}} 0$; since $S_{u_i}$ has finite rank, $i\in \bb{I}$, we have that $S_u$ is 
compact. 
\end{proof}

If the group $G$ is amenable then 
$B(G) = M^{\rm cb}A(G) = MA(G)$  and the norms on these three spaces coincide, (\cite[Corollary 1.8 (ii)]{ch}). 
It was shown in \cite[Proposition 6.10]{lau} that, in this case, the map $S_u$ is compact precisely when $u\in A(G)$.
By Corollary \ref{th_ccg}, automatic complete compactness holds:

\begin{corollary}\label{disamen}
Let $u:G\to\mathbb C$ be a Herz-Schur multiplier. If $G$ is  a discrete amenable group then the following are equivalent:
\begin{itemize}
\item[(i)] the map $S_u$ is completely compact;

\item[(ii)] $u\in A(G)$.
\end{itemize}
\end{corollary}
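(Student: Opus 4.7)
The plan is to obtain this as an immediate consequence of Corollary \ref{th_ccg}, so the real content is just identifying $A_{\rm cb}(G)$ with $A(G)$ in the amenable case.

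First I would observe that every discrete amenable group possesses the approximation property (AP). This follows from the fact that amenable discrete groups have the weak containment property and in particular satisfy (AP) (via \cite{haagerup-kraus}), so the hypothesis of Corollary \ref{th_ccg} is met. In particular, the four conditions listed there are equivalent for $u$, and it suffices to prove $u\in A(G)$ if and only if $u\in A_{\rm cb}(G)$.

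Next I would exploit the norm identifications $\|\cdot\|_{A(G)}=\|\cdot\|_{B(G)}=\|\cdot\|_{MA(G)}=\|\cdot\|_{M^{\cb}A(G)}$ on $A(G)$ that hold in the amenable case, as noted in the paragraph preceding the statement (using that $A(G)$ sits isometrically inside $B(G)$). Concretely, for any $u\in A(G)$ one has $\|u\|_{A(G)}=\|u\|_{\rm m}$. Since $(A(G),\|\cdot\|_{A(G)})$ is a Banach space, $A(G)$ is already complete under the inherited norm $\|\cdot\|_{\rm m}$, hence closed as a subspace of $M^{\cb}A(G)$. Therefore
\[
A_{\rm cb}(G) := \overline{A(G)}^{\,\|\cdot\|_{\rm m}} = A(G).
\]

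With this identification in hand, the equivalence is immediate: by Corollary \ref{th_ccg}, $S_u$ is completely compact if and only if $u\in A_{\rm cb}(G)$, and the previous step identifies the right-hand side with $A(G)$. This yields both implications of the corollary.

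I do not expect any substantive obstacle; the entire argument is a two-line reduction, and the only point worth stating carefully is the closure identity $A_{\rm cb}(G)=A(G)$, which follows from the amenable coincidence of multiplier norms rather than from any further analysis.
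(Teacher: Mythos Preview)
Your proposal is correct and follows essentially the same route as the paper: the paragraph immediately preceding the corollary notes that for amenable $G$ one has $B(G)=M^{\cb}A(G)=MA(G)$ with coinciding norms, so $A_{\rm cb}(G)=A(G)$, and the result then drops out of Corollary~\ref{th_ccg}. The only cosmetic point is that your justification for (AP) via ``weak containment'' is slightly misphrased; the cleanest route is Leptin's theorem (a bounded approximate identity in $A(G)$), which directly yields (AP).
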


\begin{proposition}\label{p_cnoncc} 
Let $G$ be  a discrete group containing the free group $F_\infty$ on 
infinitely many generators. If $G$ has the (AP), then there exists a multiplier $u\in MA(G)$, for which the map
$S_u$ is compact but not completely compact.
\end{proposition}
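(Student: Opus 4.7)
The plan is to reduce to a Banach-space inequivalence and then invoke Bo\.{z}ejko's theorem. By Remark \ref{r_MAG} and Corollary \ref{th_ccg}, both of which apply since $G$ has (AP), any $u\in A_M(G)\setminus A_{\rm cb}(G)$ witnesses the proposition: the first membership gives $S_u$ compact, while the second rules out complete compactness---either directly via Corollary \ref{th_ccg} if $u\in M^{\cb}A(G)$, or trivially if $u\notin M^{\cb}A(G)$, since a non-cb map cannot be completely compact. It therefore suffices to show that the inclusion $A_{\rm cb}(G)\subseteq A_M(G)$ is strict (note that this inclusion holds, because $\|\cdot\|_{MA}\leq\|\cdot\|_{M^{\cb}A}$ implies that any Cauchy sequence for the stronger norm is Cauchy for the weaker one and converges to the same pointwise limit).

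First, I would invoke Bo\.{z}ejko's theorem \cite{bozejko}, which yields that the norms $\|\cdot\|_{MA}$ and $\|\cdot\|_{M^{\cb}A}$ are not equivalent on $A(F_\infty)$. After normalisation this produces a sequence of finitely supported $u_n\in A(F_\infty)$ with $\|u_n\|_{MA(F_\infty)}\to 0$ while $\|u_n\|_{M^{\cb}A(F_\infty)}\geq 1$. Since every subgroup of a discrete group is open, $F_\infty$ is an open subgroup of $G$; the standard fact that extension by zero is isometric on $A(H)\hookrightarrow A(G)$ for open subgroups $H\leq G$, together with the contractivity of the restriction maps $MA(G)\to MA(F_\infty)$ and $M^{\cb}A(G)\to M^{\cb}A(F_\infty)$, implies that the extensions $\tilde u_n\in A(G)$ satisfy $\|\tilde u_n\|_{MA(G)}=\|u_n\|_{MA(F_\infty)}\to 0$ and $\|\tilde u_n\|_{M^{\cb}A(G)}=\|u_n\|_{M^{\cb}A(F_\infty)}\geq 1$. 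Thus the two norms are inequivalent on $A(G)$ as well.

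The final step is an open mapping argument. The identity map $\iota:(A_{\rm cb}(G),\|\cdot\|_{M^{\cb}A})\to (A_M(G),\|\cdot\|_{MA})$ is a continuous linear injection between Banach spaces. Were $\iota$ surjective, the open mapping theorem would produce a constant $C>0$ with $\|v\|_{M^{\cb}A}\leq C\|v\|_{MA}$ for every $v\in A_{\rm cb}(G)$, and in particular for every $v$ in the dense subspace $A(G)$, contradicting the inequivalence just established. Hence $A_{\rm cb}(G)\subsetneq A_M(G)$, and any element of the set difference serves as the desired $u$. The main obstacle I anticipate is the verification that extension by zero across the open subgroup $F_\infty$ preserves the completely bounded multiplier norm; this is standard folklore but must be handled by a careful pairing of the zero-extension and restriction maps on the multiplier and Herz-Schur multiplier algebras.
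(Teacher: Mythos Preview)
Your proposal is correct and follows essentially the same route as the paper: invoke Bo\.{z}ejko's inequivalence on $A(F_\infty)$, transfer it to $A(G)$ via extension-by-zero and restriction across the open subgroup, apply the open mapping theorem to conclude $A_{\rm cb}(G)\subsetneq A_M(G)$, and finish with Corollary~\ref{th_ccg} and Remark~\ref{r_MAG}. The paper spells out the transference more explicitly---it derives the upper bound $\|\tilde\varphi_n\|_{MA(G)}\leq 1$ directly from the $A$-isometry of zero-extension plus the definition of the multiplier norm, and the lower bound $\|\tilde\varphi_n\|_{M^{\rm cb}A(G)}\geq C\sqrt{n}$ via the Bo\.{z}ejko--Fendler Schur-multiplier identification and restriction to $H\times H$---whereas you package these as the folklore equalities for extension-by-zero on $MA$ and $M^{\rm cb}A$; but the underlying arguments are the same, and you correctly flag this step as the one requiring care.
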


\begin{proof}
Write $H = F_\infty$, considering it as a subgroup of $G$.
We note first that
if $\varphi\in A(H)$ and $\tilde\varphi$ is the extension by zero of $\varphi$ to $G$,
then $\|\varphi\|_{A(H)}=\|\tilde\varphi\|_{A(G)}$.
In fact, 
we have $\varphi(s)=(\lambda_H(s)\xi,\eta)$ for some $\xi,\eta\in\ell^2(H)$ such that
$\|\varphi\|_{A(H)} = \|\xi\|_2\|\eta\|_2$. Considering $\ell^2(H)$ as a subspace of $\ell^2(G)$ and letting 
$\tilde\xi$ and $\tilde \eta$ be the extensions by zero to $\ell^2(G)$ of $\xi $ and $\eta$, respectively, we have that
$\tilde \varphi(s)=(\lambda_G(s)\tilde\xi,\tilde\eta)$  
and hence $\|\varphi\|_{A(G)}\leq\|\tilde\xi\|_2\|\tilde\eta\|_2=\|\varphi\|_{A(H)}$.
As the restriction map $r:A(G)\to A(H)$, $u\mapsto u|_H$, is contractive
\cite[Proposition 3.21]{eymard}, we have also
$\|\varphi\|_{A(H)}\leq\|\tilde\varphi\|_{A(G)}$, giving $\|\varphi\|_{A(H)} = \|\tilde\varphi\|_{A(G)}$.

In the proof of \cite[Theorem 2]{bozejko}, 
Bo\.{z}ejko constructed functions $\varphi_n\in A(H)$ with finite supports 
$E_n\subseteq H \subseteq G$ such that $\|\varphi_n\|_{MA(H)}=1$ but
$\|\varphi_n\|_{M^{\rm cb }A(H)}\geq C\sqrt{n}$, for some constant $C > 0$. Given $u\in A(G)$, we now have
$$\|\varphi_n u\|_{A(G)} = \|\varphi_n u|_{H}\|_{A(H)}\leq \|u|_{H}\|_{A(H)}\leq\|u\|_{A(G)};$$
for the last inequality we use the contractability of $r$.
Thus, $\|\varphi_n\|_{MA(G)} \leq 1$.
Let $\tilde{\varphi}_n$ be the extension by zero of $\varphi_n$ 
to a function on $G$, $n\in \bb{N}$, and $\psi_n = N(\tilde{\varphi}_n)$.
By \cite{bf}, $\psi_n$ is a Schur multiplier, $n\in \bb{N}$, and 
$$ \|\varphi_n\|_{M^{\rm cb}A(G)}
 = \|S_{\psi_n}\|_{\rm cb} 
 \geq\|S_{\psi_n\chi_{H\otimes H}}\|_{\rm cb}
  = \|\varphi_n\|_{M^{\rm cb}A(H)}\geq C\sqrt{n}.$$
 This shows that the norms $\|\cdot\|_{MA(G)}$ and $\|\cdot\|_{M^{\rm cb}A(G)}$ are not equivalent on $A(G)$. 
 As the completely bounded norm dominates the multiplier norm, by applying the open mapping theorem one obtains that
 $A_{\rm cb}(G) \ne  A_M(G)$. 
By Corollary  \ref{th_ccg} and Remark \ref{r_MAG}, 
there exists a compact multiplier 
which is not completely compact.
\end{proof}

We remark that, for the free group $\bb{F}_2$ on two generators, the inequality 
$A_{\rm cb}(\bb{F}_2)\ne A_M(\bb{F}_2)$ was proved in \cite[Proposition 4.2]{br_forr}.


\section{Subclasses of completely compact Herz-Schur multipliers}\label{s_ex}

In this section, we exhibit some canonical ways to construct completely compact Herz-Schur 
multipliers of dynamical systems, and describe them explicitly in an important special case. 
We assume throughout the section that $(A,G,\alpha)$ is a C*-dynamical system, where $G$ is a 
discrete group. 
A linear map $T: A\to A$ will be called \emph{$\alpha$-invariant} if 
\begin{equation}\label{eq_ia}
\alpha_t\circ T = T\circ\alpha_t, \ \ \ t\in G.
\end{equation}
Note that, if $T \in {\rm CB}(A)$ is $\alpha$-invariant then 
$$(T\otimes \id\mbox{}_{\cl B(\ell^2(G))})(\pi(a)) = \pi(T(a)), \ \ \ a\in A.$$
It follows that the map $T\otimes \id_{\cl B(\ell^2(G))}\in {\rm CB}(\cl B(\cl H))$ leaves $A\rtimes_{\alpha,r}G$ invariant, and 
hence its restriction to $A\rtimes_{\alpha,r}G$, which will be denoted by $\tilde{T}$, is well-defined.
We have that
\begin{equation}\label{eq_alpin}
\tilde{T}(\pi(a)\lambda_t) = \pi(T(a))\lambda_t, \ \ \ a\in A, t\in G.
\end{equation}
Thus, letting $F_T : G\to {\rm CB}(A)$ be the function given by 
$F_T(s) = T$, $s\in G$, we have that $\tilde{T} = S_{F_T}$, and that $F_T(s)$ is $\alpha$-invariant for every $s\in G$. 
This leads us to introducing the following classes of Herz-Schur multipliers:
$$\frak{S}^{\rm inv}(A,G,\alpha) = \left\{F\in \frak{S}(A,G,\alpha) : F(s) \mbox{ is } \alpha\mbox{-invariant for every } s\in G\right\},$$
and 
$$\frak{S}^{\rm inv}_{\rm cc}(A,G,\alpha) = \frak{S}^{\rm inv}(A,G,\alpha)\cap \frak{S}_{\rm cc}(A,G,\alpha).$$

We remark that $\frak{S}^{\rm inv}(A,G,\alpha)$ should not be confused with $\frak{S}_{\rm inv}(G,G,A)$ which appears in \cite{mtt} and denotes the so-called invariant Schur-$A$-multipliers (see the remark after Theorem \ref{th_tra}); while invariance is a property of the map 
$\nph : G\times G\to {\rm CB}(A)$, $\alpha$-invariance is a property of the maps $F(s)$, $s\in G$.

We will denote by $Z(A)$ the centre of $A$, and by $Z(A)^+$ the cone of its positive elements. 
Recall that the action of a discrete group $G$ on $A$ is 
\emph{amenable} \cite[Definition 4.3.1]{brown-ozawa}
if there exists a net $(\xi_i)_{i\in \bb{I}}$ of finitely supported functions 
$\xi_i : G\to Z(A)^+$, such that $\sum_{i\in \bb{I}}\xi_i(t)^2 =1_A$ and
$$\left\|\sum_{s\in G} \left(\xi_i(s) - \alpha_t(\xi_i(t^{-1}s))\right)^2\right\| \to_{i\in \bb{I}} 0$$
for every $t\in G$.

\begin{theorem}\label{ccmult2}
Let $\alpha$ be an amenable action of a discrete group $G$ on a unital C*-algebra $A$.
Let $u\in M^{\rm cb}A(G)$, $T\in {\rm CB}(A)$ be $\alpha$-invariant and
$F_{T,u} : G\to {\rm CB}(A)$ be given by 
$F_{T,u}(t)(a) = u(t)T(a)$, $a\in A$, $t\in G$. The following hold:
\begin{itemize}
\item[(i)] $F_{T,u}\in \frak{S}^{\rm inv}(A,G,\alpha)$;

\item[(ii)] $F_{T,u}\in \frak{S}_{\rm cc}^{\rm inv}(A,G,\alpha)$ if $u\in A_{\rm cb}(G)$ and $T\in {\rm CC}(A)$;

\item[(iii)] $F_{T,u}\in \overline{{\rm F}(A\rtimes_{\alpha,r}G)\cap \frak{S}(A,G,\alpha)}^{\rm cb}$
if $u\in A_{\rm cb}(G)$ and $T\in \overline{{\rm F}(A)}^{\rm cb}$.
\end{itemize}
\end{theorem}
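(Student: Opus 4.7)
The plan is to establish the three parts sequentially.

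For (i), since $T$ is $\alpha$-invariant, equation (\ref{eq_alpin}) produces a completely bounded map $\tilde{T}$ on $A\rtimes_{\alpha,r}G$ with $\tilde{T}(\pi(a)\lambda_t) = \pi(T(a))\lambda_t$, while $S_u\in \mathrm{CB}(A\rtimes_{\alpha,r}G)$ because $u\in M^{\cb}A(G)$. A direct comparison on the generators $\pi(a)\lambda_t$ gives $S_{F_{T,u}} = S_u\circ \tilde{T}$, so $F_{T,u}\in \frak{S}(A,G,\alpha)$, and each $F_{T,u}(t) = u(t)T$ is $\alpha$-invariant.

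For (ii), I would approximate $u$ by finitely supported $u_n\in A(G)$ with $\|u_n - u\|_{\rm m} \to 0$ (using density of finitely supported functions in $A(G)$ for discrete $G$ combined with $\|\cdot\|_{\rm m}\leq \|\cdot\|_{A(G)}$) and write $E_n = \supp u_n$. Then
$$S_{F_{T,u_n}}(x) = \sum_{t\in E_n} u_n(t)\pi\bigl(T(\cl E(x\lambda_t^{-1}))\bigr)\lambda_t.$$
Given $\varepsilon > 0$, let $\cl W\subseteq A$ be the finite-dimensional subspace witnessing the complete compactness of $T$; since $x\mapsto \cl E(x\lambda_t^{-1})$ is completely contractive, the $k$-th amplification $S_{F_{T,u_n}}^{(k)}$ of the unit ball of $M_k(A\rtimes_{\alpha,r}G)$ lies within $\varepsilon\sum_{t}|u_n(t)|$ of $M_k(\cl V_n)$, where $\cl V_n = \sum_{t\in E_n}\pi(\cl W)\lambda_t$ is finite dimensional. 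Hence each $S_{F_{T,u_n}}$ is completely compact, and the estimate $\|S_{F_{T,u}} - S_{F_{T,u_n}}\|_{\cb} \leq \|u - u_n\|_{\rm m}\|T\|_{\cb}$ together with the closedness of $\ccm(A\rtimes_{\alpha,r}G)$ in the completely bounded norm finishes (ii).

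For (iii), additionally pick $T_m\in \mathrm{F}(A)$ with $\|T - T_m\|_{\cb} \to 0$. Because the $T_m$ need not be $\alpha$-invariant, $\tilde{T}_m$ is not intrinsically available; I instead build, for each pair $(n,m)$, the completely bounded, finite-rank map
$$\Phi_{n,m}(x) = \sum_{t\in E_n} u_n(t)\,\pi\bigl(T_m(\cl E(x\lambda_t^{-1}))\bigr)\lambda_t,$$
whose range lies in the finite-dimensional subspace ${\rm span}\{\pi(b)\lambda_t : b\in \ran T_m,\, t\in E_n\} \subseteq \cl V_{A,G,\alpha}$. A short calculation using (\ref{eq_piapq}) gives $F_{\Phi_{n,m}}(s) = u_n(s)T_m$ and $\Phi_{n,m} = S_{F_{\Phi_{n,m}}}$, so Proposition \ref{p_FPhi}~(i) and (iv) place $F_{\Phi_{n,m}}$ in ${\rm F}(A\rtimes_{\alpha,r}G)\cap \frak{S}(A,G,\alpha)$. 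The decomposition
$$S_{F_{T,u}} - \Phi_{n,m} = S_{u - u_n}\circ \tilde{T} + R_{n,m}, \quad R_{n,m}(x) = \sum_{t\in E_n}u_n(t)\pi\bigl((T-T_m)(\cl E(x\lambda_t^{-1}))\bigr)\lambda_t,$$
yields
$$\|S_{F_{T,u}} - \Phi_{n,m}\|_{\cb} \leq \|u-u_n\|_{\rm m}\|T\|_{\cb} + \Bigl(\sum_{t\in E_n}|u_n(t)|\Bigr)\|T-T_m\|_{\cb}.$$
Choosing $n$ first to handle the first summand and then $m$ large (with $n$ fixed) to handle the second gives $F_{T,u}$ as a $\|\cdot\|_{\rm m}$-limit of elements of ${\rm F}(A\rtimes_{\alpha,r}G)\cap \frak{S}(A,G,\alpha)$.

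The main obstacle is precisely the failure of $\alpha$-invariance for the approximants $T_m$ in (iii), which prevents the naive composition $S_{u_n}\circ \tilde{T}_m$; the workaround is to construct $\Phi_{n,m}$ by hand from the Fourier coefficient maps $x\mapsto \cl E(x\lambda_t^{-1})$ over the finite set $E_n$, and then to invoke Proposition \ref{p_FPhi} to recognize them as extensions of genuine, finite-rank Herz-Schur multipliers.
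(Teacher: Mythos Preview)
Your arguments for (i) and (ii) are correct and follow essentially the same line as the paper's.

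For (iii), your proof is also correct, but it takes a genuinely different route from the paper and is in fact more elementary. The paper uses the amenability of the action in an essential way: it chooses a net $(\xi_i)$ as in the definition of amenable action and averages each non-invariant $T_k$ to form
\[
F_{i,k}(s)(a)=\sum_{q\in G}\xi_i(q)\,\alpha_q\bigl(T_k(\alpha_q^{-1}(a))\bigr)\,\alpha_s(\xi_i(s^{-1}q)),
\]
then verifies via an explicit Stinespring-type factorisation of $\cl N(F_{i,k})$ that these are Herz--Schur multipliers with a uniform cb-bound, and finally shows $\|F_{i,k}(s)-T\|_{\cb}\to 0$ for each $s$ using the approximate invariance of the $\xi_i$, before passing through an auxiliary $v\in A(G)$ of finite support to obtain (\ref{cbconv}). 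You bypass all of this by noting that the finite sum $\Phi_{n,m}(x)=\sum_{t\in E_n}u_n(t)\,\pi(T_m(\cl E(x\lambda_t^{-1})))\lambda_t$ is \emph{a priori} completely bounded (as a composite of completely contractive slices and $T_m$), so that Proposition~\ref{p_FPhi}(i) alone certifies $s\mapsto u_n(s)T_m$ as a Herz--Schur multiplier, with no need for $T_m$ to be $\alpha$-invariant. As a consequence, your proof of (iii) does not use the amenability hypothesis at all and thus yields a strictly stronger statement than the one recorded. The paper's averaging construction does buy something extra, namely approximants whose values $F_{i,k}(s)$ are themselves close to $\alpha$-invariant, which fits the $\frak{S}^{\rm inv}$ theme of the section; but that feature is not required for the conclusion as stated.
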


\begin{proof}
(i) 
Let $\tilde{u} : G\to {\rm CB}(A)$ be given by $\tilde{u}(t) = u(t)\id_A$. 
The map $S_{\tilde{u}}$ is the restriction to $A\rtimes_{\alpha,r}G$ of the 
map $\id_{\cl B(H)}\otimes S_u\in {\rm CB}(\cl B(H)\otimes_{\min} C_r^*(G))$ and hence
$\tilde{u}\in \frak{S}(A,G,\alpha)$.
Since $\tilde{T} = S_{F_T}$ and $F_T\in \frak{S}^{\rm inv}(A,G,\alpha)$, we conclude that 
$$F_{T,u} = \tilde{T} \circ S_{\tilde{u}} \in \frak{S}^{\rm inv}(A,G,\alpha).$$ 

(ii) 
Since the space of finitely supported functions in $A(G)$ is dense in $A(G)$ \cite{eymard}
and $\|\cdot\|_{\rm m}\leq \|\cdot\|_{A}$, given $\varepsilon > 0$, there exists $v\in A(G)$ supported in a finite set 
$E\subseteq G$ such that $\|u-v\|_{\rm m} < \varepsilon/\|T\|_{\rm cb}$.
Let $\cl F\subseteq A$ be a finite dimensional space such that
$\text{dist}(T^{(n)}(a), M_n(\cl F)) < \varepsilon/|E|$ for every $a\in M_n(A)$, $\|a\|\leq 1$, and every $n\in\mathbb N$.

Set 
$$\cl Y = \left\{\sum_{s\in E}\pi(a_s)\lambda_s: a_s\in \cl F\right\} \mbox{ and }
\cl Z = \left\{\sum_{s\in E}\pi(a_s)\lambda_s: a_s\in A\right\};$$ 
we have that $\cl Y$ is a finite-dimensional subspace of $\cl Z$. 
Let $x = [x_{k,l}]\in M_n(\cl Z)$, $\|x\|\leq 1$,  and write $L_s = \text{diag}(\lambda_s,\ldots,\lambda_s)\in M_n(\cl Z)$.  
Then 
$$[x_{k,l}] = \sum_{s\in E}  (\pi\circ \cl E)^{(n)}\left([x_{k,l}]L_{s^{-1}}\right)L_s;$$ 
as $\cl E$ is completely contractive, 
$\left\|\cl E^{(n)}\left([x_{k,l}]L_{s^{-1}}\right)\right\|\leq 1$.
For each $s\in E$, choose $y_s = [y_{k,l}^s]\in M_n(\cl F)$ such that 
\begin{equation}\label{eq_TL}
\left\|T^{(n)}\left(\cl E^{(n)}\left([x_{k,l}]L_{s^{-1}}\right)\right) - y_s\right\| < \varepsilon/|E|
\end{equation}
and set $y = \sum_{s\in E} \pi^{(n)}(y_s) L_s\in M_n(\cl Y)$.  
By (\ref{eq_TL}),
$$\left\|\tilde{T}^{(n)}\left([x_{k,l}]\right)-y\right\|
= \left\|\sum_{s\in E}\left[T\left(\cl E\left(x_{k,l}\lambda_{s^{-1}}\right)\right)\lambda_s-y_{k,l}^s\lambda_s\right]\right\|
\leq \varepsilon.$$
Thus, $\tilde{T}|_{\cl Z}$ is completely compact. 
Since the image of $S_v$ is in ${\cl Z}$ we obtain that $\tilde{T}\circ S_v$ is completely compact.
The statement now follows from the inequalities
$$\|\tilde{T}\circ S_v - \tilde{T}\circ S_u\|_{\rm cb}
\leq 
\|\tilde{T}\|_{\cb} \|S_v - S_u\|_{\rm cb} = \|\tilde{T}\|_{\rm cb} \|u - v\|_{\rm m}
\leq 
\varepsilon $$
and the fact that the space of completely compact maps is closed with respect to the completely bounded norm.

(iii) Let
$(T_k)_{k\in \bb{N}}\subseteq {\rm F}(A)$ be a sequence such that $\|T_k - T\|_{\cb}\to_{k\to\infty} 0$. 
We follow the idea in the proof of \cite[Corollary 4.6]{mstt}.
Let $(\xi_i)_{i\in I}$ be a net as in definition of amenable action and
the maps $F_{i,k}(s) : A\to A$, $s\in G$, $i\in \bb{I}$, $k\in \bb{N}$, 
be given by
\begin{equation}\label{eq_Fij}
F_{i,k}(s)(a) = 
\sum_{q\in G} \xi_i(q)\alpha_q(T_k(\alpha_q^{-1}(a))\alpha_s(\xi_i(s^{-1}q)), \ \ \ a\in A.
\end{equation}
Note that if $E_i\subseteq G$ is a finite set with 
$\supp\xi_i\subseteq E_i$ then the sum on the right hand side of (\ref{eq_Fij}) 
is over the (finite) set $E_i\cap sE_i$, for every $k\in \bb{N}$.
We have
\begin{eqnarray*}
\cl N(F_{i,k})(s,t)(a) 
& = & 
\alpha_{t^{-1}}\left(\sum_{q\in G} \xi_i(q)\alpha_q(T_k(\alpha_q^{-1}(\alpha_t(a)))\alpha_{ts^{-1}}(\xi_i(st^{-1}q))\right)\\
& = & 
\sum_{q\in G} \alpha_{t^{-1}}(\xi_i(q))\alpha_{t^{-1}q} (T_k(\alpha_{q^{-1}t}(a)))\alpha_{s^{-1}}(\xi_i(st^{-1}q))\\
& = & 
\sum_{p\in G} \alpha_{t^{-1}}(\xi_i(tp))\alpha_{p} (T_k(\alpha_{p^{-1}}(a)))\alpha_{s^{-1}}(\xi_i(sp)).
\end{eqnarray*}
Since $T_k : A\to A$ is a completely bounded map, by Haagerup-Paulsen-Wittstock Theorem,
there exist a Hilbert space $H_{p,k}$, a *-representation $\pi_{p,k} : A\to\cl B(H_{p,k})$ 
and bounded operators $V_{p,k}$, $W_{p,k} : H \to H_{p,k}$, such that
$$\alpha_p(T_k(\alpha_p^{-1}(a))) = W_{p,k}^*\pi_{p,k}(a)V_{p,k}, \ \ \ a\in A,$$
and $\|T_k\|_{\rm cb} = \|\alpha_p\circ T_k\circ\alpha_p^{-1}\|_{\cb} =  \|V_{p,k}\|\|W_{p,k}\|$. Renormalising we may assume that $ \|V_{p,k}\|=\|W_{p,k}\|=\|T_k\|_{\rm cb}^{1/2}$ for all $p$. 

Set $\rho_k := \oplus_{p\in G}\pi_{p,k}$ and let ${\bf V}_{i,k}(s) : H\to\oplus_{p\in G} H_{p,k}$ and
${\bf W}_{i,k}(t) : H\to\oplus_{p\in G} H_{p,k}$ be the column operators given by
$${\bf V}_{i,k}(s) = (V_{p,k}\alpha_s^{-1}(\xi_i(sp)))_{p\in G} \ \text{ and } \ 
{\bf W}_{i,k}(t) = (W_{p,k}\alpha_t^{-1}(\xi_i(tp)))_{p\in G}.$$
Then
$$\cl N(F_{i,k})(s,t)(a) = {\bf W}_{i,k}^*(t)\rho_k(a){\bf V}_{i,k}(s)$$
and
\begin{eqnarray*}
\|{\bf W}_{i,k}(t)\|\|{\bf V}_{i,k}(s)\|
& = &
\left\|\sum_{p\in G}\alpha_{t^{-1}}(\xi_i(tp))W_{p,k}^*W_{p,k}\alpha_{t^{-1}}(\xi_i(tp))\right\|^{1/2}\\
&&
\times \left\|\sum_{p\in G}\alpha_{s^{-1}}(\xi_i(sp))V_{p,k}^*V_{p,k}\alpha_{s^{-1}}(\xi_i(sp))\right\|^{1/2}\\
\\&\leq &
\|T_k\|_{\rm cb} \left\|\sum_{p\in G}\alpha_{s^{-1}}(\xi_i(sp))^2\right\|\\
&= &
\|T_k\|_{\rm cb} \left\|\sum_{p\in G}\xi_i(p)^2\right\| = \|T_k\|_{\cb}.
\end{eqnarray*}
Hence, by \cite[Theorem 2.6]{mtt},
$\cl N(F_{i,k})$  is a Schur $A$-multiplier, for all $i\in \bb{I}$ and all $k\in \bb{N}$. 
By Theorem \ref{th_tra}, $F_{i,k}$ is a Herz-Schur $(A,G,\alpha)$-multiplier; 
moreover, as the sequence  $(\|T_k\|_{\cb})_{k\in \bb{N}}$ is bounded, there exists $C > 0$ such that 
$\|S_{F_{i,k}}\|_{\cb} \leq C$, $i\in \bb{I}$, $k\in \bb{N}$.

Next we prove that $\|F_{i,k}(s) - T\|_{\cb}\to 0$ for each $s\in G$.
As $T$ is $\alpha$-invariant, if $a = [a_{r,l}]\in M_n(A)$ then
\begin{eqnarray*}
&&\|F_{i,k}(s)^{(n)}(a)-T^{(n)}(a)\|\\
& = &
\left\|\sum_{p\in G}[\xi_i(p)\alpha_s(\xi_i(s^{-1}p))\alpha_p(T_k(\alpha_p^{-1}(a_{r,l})))-
\alpha_p(T(\alpha_p^{-1}(a_{r,l})))]\right\|\\
& \leq &
\sum_{p\in G} \left\|\xi_i(p)\alpha_s(\xi_i(s^{-1}p))\right\| \left\|[\alpha_p(T_k(\alpha_p^{-1}(a_{r,l})) - 
\alpha_p(T(\alpha_p^{-1}(a_{r,l})))]\right\|\\
& + &
\left\|\sum_{p\in G} \xi_i(p)\alpha_s(\xi_i(s^{-1}p))-1\right\| \left\|T^{(n)}(a)\right\|\\
& \leq &
\left\|\sum_{p\in G}\xi_i(p)^2\right\| \left\|T_k - T\right\|_{\cb}
\|a\| + \left\|\sum_{p\in G} \xi_i(p)\alpha_s(\xi_i(s^{-1}p))-1\right\| \|T\|_{\rm cb}\|a\|,
\end{eqnarray*}
where in the last line we used the Cauchy-Schwarz inequality and the fact $\xi_i(p) \in Z(A)^+$, $p\in G$.
By \cite[Lemma 4.3.2]{brown-ozawa}, $\|\sum_p\xi_i(p)\alpha_s(\xi_i(s^{-1}p)) - 1\|\to_{i\in \bb{I}} 0$; 
the facts that $\|T_k - T\|_{\cb}\to_{k\to\infty} 0$ and $\sum_{p\in G}\xi_i(p)^2 = 1$
now imply that 
\begin{equation}\label{eq_Fik0}
\|F_{i,k}(s)-T\|_{\cb}\to_{(i,k)\in \bb{I}\times\bb{N}} 0, \ \ \ s\in G.
\end{equation}
Since $T_k\in {\rm F}(A)$ and the maps $F_{i,k}$ are finitely supported, the map
$S_{F_{i,k}}$ on $A\rtimes_{\alpha,G} G$ has finite rank.  
In order to prove the statement, it hence suffices to show that
\begin{equation}\label{cbconv}
\| S_{F_{i,k}}\circ S_u - S_{F_{T,u}}\|_{\cb}\to_{(i,k)\in \bb{I}\times\bb{N}} 0.
\end{equation}

 Let $\varepsilon > 0$. Since $u\in A_{\rm cb}(G)$, there exists 
 $v\in A(G)$ with finite support $E$ such that 
 $$\|S_u - S_v\|_{\cb} \leq \|u - v\|_{A} \leq \varepsilon.$$
Set $\cl Z = \left\{\sum_{s\in E}\pi(a_s)\lambda_s: a_s\in A\right\}$ and let $x = [x_{r,l}]\in M_n(\cl Z)$. 
As in the proof of (ii), write $L_s$ for $\text{diag}(\lambda_s,\ldots,\lambda_s)\in M_n(\cl Z)$. 
Then $x = \sum_{s\in E}(\pi\circ\cl E)^{(n)}(x L_{s^{-1}})L_s$ and
\begin{eqnarray*}
\|S_{F_{i,k}}^{(n)}(x) - S_T^{(n)}(x)\| = \|\sum_{s\in E}[\pi((F_{i,k}(s) - T)(\cl E(x_{r,l}\lambda_{s^{-1}})))\lambda_s]\|\\
\leq \sum_{s\in E}\|[(F_{i,k}(s) - T)(\cl E(x_{r,l}\lambda_{s^{-1}})]\|\leq \sum_{s\in E}\|(F_{i,k}(s) - T)^{(n)}\|
\|[\cl E(x_{r,l}\lambda_{s^{-1}})]\|\\
\leq \sum_{s\in E}\|(F_{i,k}(s) - T)^{(n)}\| \|x\|\leq |E| \|x\| (\text{max}_{s\in E} \|F_{i,k}(s)^{(n)} - T^{(n)}\|).
\end{eqnarray*}
By (\ref{eq_Fik0}), 
$$\|(S_{F_{i,k}} - S_T)|_{\cl Z}\|_{\cb}\to_{(i,k) \in \bb{I}\times\bb{N}} 0.$$

We now have
\begin{eqnarray*}
& & \|S_{F_{i,k}}\circ S_u - S_{F_{T,u}}\|_{\cb}\\
& \leq & 
\|S_{F_{i,k}}\circ(S_u - S_v)\|_{\cb} + \|S_{F_{i,k}}\circ S_v - S_T \circ S_v\|_{\cb} + \|S_T \circ (S_v - S_u)\|_{\cb}\\ 
& \leq & 
\|(S_{F_{i,k}} - S_T)|_{\cl Z}\|_{\cb}\|v\| + \varepsilon(\|S_{F_{i,k}}\|_{\cb} + \|S_T\|_{\cb}),
\end{eqnarray*}
which implies (\ref{cbconv}).
\end{proof}

Theorem \ref{ccmult2} exhibits a large class of elements of $\frak{S}^{\rm inv}_{\rm cc}(A,G,\alpha)$. 
In the next theorem, we provide a precise description of the latter class of Herz-Schur multipliers in the 
case of the irrational rotation algebra. 
Let $\theta\in\mathbb R$ be irratrional and 
$\alpha : \bb{Z}\to {\rm Aut}(C(\mathbb T))$ be given by 
$$\alpha_n(f)(z)=f(e^{2\pi in\theta}z), \ \ \ f\in C(\mathbb T), z\in \mathbb T.$$
Let $M(\mathbb T)$ denote the Banach algebra of all complex Borel measures on the unit circle $\bb{T}$, 
and note that $L^1(\bb{T})$ is a (closed) ideal of $M(\mathbb T)$. 
For $\mu\in M(\mathbb T)$, let 
$T_{\mu} : C(\mathbb T)\to C(\mathbb T)$ be the completely bounded map given by $T_\mu(f) = \mu\ast f$.
Note that $T_{\mu}$ is $\alpha$-invariant; indeed,
\begin{eqnarray*}
\alpha_n(\mu\ast f)(z)&=&(\mu\ast f)(e^{2\pi in\theta}z)=\int f(e^{2\pi in\theta}w^{-1}z)d\mu(w)\\
&=&\int \alpha_n(f)(w^{-1}z)d\mu(w)=\mu\ast\alpha_n(f)(z).
\end{eqnarray*}

\begin{theorem}\label{th_eqirrr}
The functions $F_{T_f,u}$, where $f\in L^1(\bb{T})$ and $u\in A(\bb{Z})$, have a dense linear span in 
$\frak{S}^{\rm inv}_{\rm cc}(A,G,\alpha)$.
\end{theorem}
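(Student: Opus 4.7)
The inclusion
\[\mathrm{span}\{F_{T_f,u}:f\in L^1(\bb{T}),\,u\in A(\bb{Z})\}\subseteq \frak{S}^{\rm inv}_{\rm cc}(C(\bb{T}),\bb{Z},\alpha)\]
follows directly from Theorem \ref{ccmult2} (ii), once one observes that $A(\bb{Z})\subseteq A_{\rm cb}(\bb{Z})$ and that, for $f\in L^1(\bb{T})$, the convolution operator $T_f$ is compact on the commutative C*-algebra $C(\bb{T})$ -- hence completely compact, because complete compactness coincides with compactness for maps into a MIN operator space. For the reverse containment, the plan is to combine the finite-dimensional approximation furnished by Theorem \ref{th_ccc} with a canonical averaging procedure over the rotation action of $\bb{T}$, whose availability rests on the density of $\{e^{2\pi in\theta}:n\in \bb{Z}\}$ in $\bb{T}$.

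Since $\theta$ is irrational, the point-norm closure of $\{\alpha_n:n\in \bb{Z}\}$ inside ${\rm Aut}(C(\bb{T}))$ contains the full rotation action $\tilde{\alpha}:\bb{T}\to {\rm Aut}(C(\bb{T}))$, $\tilde{\alpha}_w(g)(z)=g(wz)$, so every $\alpha$-invariant $T\in \cbm(C(\bb{T}))$ is automatically $\tilde{\alpha}$-invariant, and hence a Fourier multiplier $T_\mu$ for some $\mu\in M(\bb{T})$. The action $\tilde{\alpha}$ commutes with $\alpha$ and lifts to a strongly continuous action $\beta:\bb{T}\to {\rm Aut}(C(\bb{T})\rtimes_{\alpha,r}\bb{Z})$ via $\beta_w(\pi(a)\lambda_n)=\pi(\tilde{\alpha}_w(a))\lambda_n$. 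Bochner integration then provides a cb-contractive averaging
\[\Phi\mapsto \tilde{\Phi}:=\int_{\bb{T}}\beta_w^{-1}\Phi\beta_w\,dw,\quad \Phi\in \cbm(C(\bb{T})\rtimes_{\alpha,r}\bb{Z}),\]
and a direct computation on generators $\pi(a)\lambda_n$ shows that $\widetilde{S_F}=S_{\tilde{F}}$, where $\tilde{F}(n)(a)=\int_{\bb{T}}\tilde{\alpha}_w^{-1}(F(n)(\tilde{\alpha}_w(a)))\,dw$. If $F\in \frak{S}^{\rm inv}(C(\bb{T}),\bb{Z},\alpha)$, density of $\{\alpha_n\}$ in $\{\tilde{\alpha}_w\}$ gives $\tilde{F}=F$.

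Since the irrational rotation algebra is nuclear, it possesses the SOAP. Fixing $F\in \frak{S}^{\rm inv}_{\rm cc}(C(\bb{T}),\bb{Z},\alpha)$, I would apply Theorem \ref{th_ccc} (ii) to produce a net $(F_i)_{i\in \bb{I}}$ of Herz-Schur multipliers, each supported on a finite set $E_i\subseteq \bb{Z}$ with ${\rm ran}\,F_i(n)\subseteq \cl{U}_i$ for some finite-dimensional $\cl{U}_i\subseteq C(\bb{T})$, and with $\|F_i-F\|_{\rm m}\to 0$. Averaging yields $\tilde{F}_i\in \frak{S}^{\rm inv}(C(\bb{T}),\bb{Z},\alpha)$, still supported on $E_i$, satisfying $\|\tilde{F}_i-F\|_{\rm m}\leq \|F_i-F\|_{\rm m}\to 0$, and with each $\tilde{F}_i(n)=T_{\mu^{(i)}_n}$ for some $\mu^{(i)}_n\in M(\bb{T})$. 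Since the indicator $\delta_n\in A(\bb{Z})$ for each $n$, this identifies
\[\tilde{F}_i=\sum_{n\in E_i}F_{T_{\mu^{(i)}_n},\delta_n},\]
and the statement reduces to verifying that $\mu^{(i)}_n\in L^1(\bb{T})$.

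This last step is the main obstacle. To handle it, I would write $F_i(n)(a)=\sum_{j=1}^{k_i}\nu_{j,n}(a)g_{j,n}$ with $g_{j,n}\in \cl{U}_i\subseteq C(\bb{T})$ and $\nu_{j,n}\in M(\bb{T})$, and evaluate the averaged map on characters:
\[\tilde{F}_i(n)(z^m)=\Bigl(\sum_{j=1}^{k_i}\hat{\nu}_{j,n}(-m)\hat{g}_{j,n}(m)\Bigr)z^m,\quad m\in \bb{Z}.\]
Since each $\hat{\nu}_{j,n}$ is bounded and each sequence $\{\hat{g}_{j,n}(m)\}_m$ lies in $\ell^2$ (because $g_{j,n}\in C(\bb{T})\subseteq L^2(\bb{T})$), the Cauchy-Schwarz inequality shows that the Fourier coefficients of $\mu^{(i)}_n$ form an $\ell^2$ sequence, hence $\mu^{(i)}_n\in L^2(\bb{T})\subseteq L^1(\bb{T})$. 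The crucial input here is that the finite-dimensionality of $\cl{U}_i$ lets us pass from generic boundedness in $M(\bb{T})$ to square-integrability of the convolving kernel; this upgrade is what makes the target space $L^1(\bb{T})$ reachable, and completes the proof of density.
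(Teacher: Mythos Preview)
Your proof is correct but takes a genuinely different route from the paper. The paper proceeds more directly: given $F\in \frak{S}^{\rm inv}_{\rm cc}(C(\bb{T}),\bb{Z},\alpha)$, it first invokes Corollary~\ref{c_Fscc} to see that each $F(n)$ is itself completely compact, identifies $F(n)=T_{\mu_n}$ via the same duality-with-$M(\bb{T})$ argument you sketch, and then applies Corollary~\ref{disamen} (using $C(\bb{T})\cong C_r^*(\bb{Z})$) to conclude immediately that $\mu_n\in L^1(\bb{T})$. The approximation step is then just amenability of $\bb{Z}$ together with Lemma~\ref{l_soat}: take finitely supported $u_i\in A(\bb{Z})$ with $S_{u_i}\to\id$ in the strong point-norm topology, so $S_{u_i}\circ S_F\to S_F$ in cb-norm, and observe that $S_{u_i}\circ S_F$ lies in ${\rm span}\{S_{F_{T_{f_n},\,u_i(n)\delta_n}}:n\in\supp u_i\}$.

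Your approach instead passes through Theorem~\ref{th_ccc} to obtain non-invariant finite-rank approximants $F_i$, averages over the full $\bb{T}$-action to restore invariance, and then exploits the finite-dimensional ranges $\cl U_i$ to force the Fourier coefficients of the resulting measures into $\ell^2$, hence $L^1$. This is a perfectly valid self-contained argument that avoids both the identification $C(\bb{T})\cong C_r^*(\bb{Z})$ and Corollary~\ref{disamen}, at the cost of setting up the $\bb{T}$-averaging machinery and the SOAP input. The paper's route is shorter because it recycles the scalar-case characterisation of completely compact multipliers (Corollary~\ref{disamen}) already in hand, rather than rebuilding an $L^1$ criterion from scratch; your route, on the other hand, makes more transparent why the finite-rank structure of Theorem~\ref{th_ccc} is already enough to land in $L^1(\bb{T})$ without invoking any further compactness results.
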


\begin{proof}
Let $F$ be a 
completely compact Herz-Schur $(C(\mathbb T),\mathbb Z,\alpha)$-multiplier 
such that $F(n)\circ \alpha_m = \alpha_m\circ F(n)$ for all $m,n\in\mathbb Z$. 
We show that $F(n) = T_{f_n}$ for some $f_n\in L^1(\mathbb T)$. 
In fact, let $\Phi_n : M(\mathbb T)\to M(\mathbb T)$ be the dual map of $F(n)$. 
As $\alpha_n(f)(z) = f(e^{2\pi in\theta}z) = (\delta_{e^{-2\pi in\theta}}\ast f)(z)$, where $\delta_s$ 
is the point mass measure at $s\in\mathbb T$, 
we obtain
\begin{eqnarray*}
\left\langle \Phi_n(\delta_{e^{2\pi im\theta}}\ast\mu), f\right\rangle 
& = &
\left\langle \delta_{e^{2\pi im\theta}}\ast\mu, F(n)(f)\right\rangle 
= 
\left\langle \mu,\alpha_{-m}(F(n)(f))\right\rangle\\
& = & 
\left\langle \mu,F(n)(\alpha_{-m}(f))\right\rangle
=
\left\langle \delta_{e^{2\pi im\theta}}\ast\Phi_n(\mu),f\right\rangle,
\end{eqnarray*}
giving 
$$\Phi_n(\delta_{e^{2\pi im\theta}}\ast\mu)=\delta_{e^{2\pi im\theta}}\ast\Phi_n(\mu), 
\ \ m,n\in \bb{Z}, \mu\in M(\mathbb T).$$ 
In particular, $\Phi_n(\delta_{e^{2\pi im\theta}})=\delta_{e^{2\pi im\theta}}\ast\Phi_n(\delta_1)$. 
Using the weak* continuity of $\Phi_n$, 
the density of $\{e^{2\pi im\theta}: m\in\mathbb Z\}$ in $\mathbb T$ and the fact that 
every measure $\mu$ is a weak* limit of linear combinations 
of point mass measures,
we obtain
$$\Phi_n(\mu) = \mu\ast\Phi_n(\delta_1), \ \ \ \mu\in M(\mathbb T).$$ 
It now follows that, if $\mu_n = \Phi_n(\delta_1)$ then $F(n)(f) = \mu_n\ast f$, $f\in C(\mathbb T)\simeq C_r^*(\mathbb Z)$.
By Corollary \ref{c_Fscc}, $F(n)$ is completely compact for every $n\in \bb{Z}$; by 
Corollary \ref{disamen}, $\mu_n\in L^1(\mathbb T)$.

Fix $F\in \frak{S}^{\rm inv}_{\rm cc}(A,G,\alpha)$; then $F(n)$ is $\alpha$-invariant for every $n\in \bb{Z}$. 
We show that 
$S_F\in\overline{[S_u\circ S_{F(n)}: u\in A(\mathbb Z), n\in\mathbb Z]}^{\|\cdot\|_{\cb}}$. 
By the amenability of $\mathbb Z$, there exists a bounded sequence 
$(u_i)_{i\in \bb{N}}\subseteq A(\mathbb Z)$
of finitely supported functions such that  $S_{u_i}\to \id$ in the strong point norm topology 
(see \cite[Theorem 1.12, Theorem 1.9]{haagerup-kraus}).
Since $u_i$ is finitely supported, 
$$S_{u_i}\circ S_F\in {\rm span}\{S_u\circ S_{F(n)}: u\in A(\mathbb Z), n\in\mathbb Z\}.$$ 
Since $S_F$ is completely compact, 
Lemma \ref{l_soat} implies $\|S_{u_i}\circ S_F - S_F\|_{\cb}\to_{i\to\infty} 0$.
\end{proof}


\section{Herz-Schur multipliers of $\cl K$}\label{s_K}

Let $G$ be a discrete group 
and $\alpha : G\to {\rm Aut}(c_0(G))$ be the homomorphism given by 
$\alpha_t(f)(s) = f(t^{-1}s)$, $s\in G$, $f\in c_0(G)$. 
For $a\in c_0(G)$, we write $M_a$ for the operator on $\ell^2(G)$ given by $(M_a\xi)(s) = a(s) \xi(s)$,
$\xi\in \ell^2(G)$, $s\in G$, and 
let
$$\cl C = \{M_a : a\in c_0(G)\};$$
the map $\iota : c_0(G)\to \cl C$ given by $\iota(a) = M_a$ is a *-isomorphism.
By abuse of notation, we write $\alpha_t$ for the corresponding automorphism of $\cl C$.
Recall that, for $t\in G$, we denote by $\lambda_t^0$ 
the left regular unitary acting on $\ell^2(G)$.
The pair $(\iota,\lambda^0)$ of representations is covariant, 
and hence gives rise to a
(faithful) representation $\iota\rtimes\lambda^0  : \cl C\rtimes_{\alpha,r} G \to \cl B(\ell^2(G))$.
According to the Stone-von Neumann Theorem
\cite[Theorem 4.24]{w}, the image of $\iota\rtimes \lambda^0$ coincides with the C*-algebra $\cl K$ 
of all compact operators on $\ell^2(G)$.
Thus, the Herz-Schur $(\cl C,G,\alpha)$-multipliers give rise, in a canonical way, to 
a certain class of completely bounded maps on $\cl K$. The aim of this section is to formalise this 
correspondence and examine the complete compactness of the resulting maps on $\cl K$. 

Set $\cl B = (\pi\rtimes\lambda)(\cl C\rtimes_{\alpha,r} G)$; 
thus $\cl B$ is a C*-subalgebra of $\cl B(\ell^2(G\times G))$.
By \cite[Theorem 4.24]{w}, 
the map
$$\theta : \pi(a)\lambda_t \longrightarrow M_a \lambda_t^0, \ \ \ a\in \cl C, t\in G,$$
is a *-isomorphism from $\cl B$ onto $\cl K$.
For an element $\Phi \in \cbm(\cl K)$, let 
$\widetilde{\Phi} = \theta^{-1}\circ \Phi \circ \theta$; thus,
$\widetilde{\Phi}\in \cbm(\cl B)$, and the
correspondence $\Phi \longrightarrow \widetilde{\Phi}$ between $\cbm(\cl K)$ and $\cbm(\cl B)$ is bijective.
For a Herz-Schur $(\cl C,G,\alpha)$-multiplier $F$,
let $\Phi_F = \theta\circ S_F \circ \theta^{-1}$
note that $\Phi_F\in {\rm CB}(\cl K)$ and
$$\Phi_F(M_a \lambda_t^0) = M_{F(t)(a)} \lambda_t^0, \ \ \ a\in \cl C, t\in G.$$
We call the maps of the form $\Phi_F$ the \emph{Herz-Schur $\cl K$-multipliers}.
We let
$$\frak{S}(\cl K) = \{\Phi_F : F \mbox{ is a Herz-Schur } (\cl C,G,\alpha)\mbox{-multiplier}\},$$
and
$\frak{S}_{\rm cc}(\cl K) = \frak{S}(\cl K)\cap {\rm CC}(\cl K)$.
We note that the elements of $\frak{S}(\cl K)$ are precisely those completely bounded maps on $\cl K$ 
which leave its diagonals globally invariant, when elements of $\cl K$ are viewed as $G\times G$-matrices.

We recall that, if $K$ is a Hilbert space and $\cl A\subseteq \cl B(K)$ 
is a C*-algebra, the Haagerup tensor product $\cl A\otimes_{\rm h}\cl A$ 
consists of (convergent) series $u = \sum_{i=1}^{\infty} a_i\otimes b_i$, where $(a_i)_{i\in \bb{N}}\subseteq \cl A$
and $(b_i)_{i\in \bb{N}}\subseteq \cl A$ are sequences for which the series 
$\sum_{i=1}^{\infty} a_i a_i^*$ and $\sum_{i=1}^{\infty} b_i^* b_i$ converge in norm. 
Each such $u$ gives rise to a completely bounded map $\Gamma_u : \cl B(K)\to \cl B(K)$ given by 
$$\Gamma_u(x) = \sum_{i=1}^\infty a_ixb_i, \ \ \ x\in \cl B(K).$$
The following fact was noted in  \cite[Corollary 3.6]{jltt}.

\begin{theorem}\label{complcomp}
Any completely compact map on $\cl K$ has the form $\Gamma_u$ for some element
$u \in \cl K\otimes_{\rm h}\cl K$.
\end{theorem}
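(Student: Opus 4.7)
The plan is to combine the approximation characterization of complete compactness from \cite{webster} with the well-known complete isometric embedding $\cl K \otimes_{\rm h}\cl K \hookrightarrow \cbm(\cl K)$ given by $u \mapsto \Gamma_u$. Specifically, I would first note that $\cl K$, being nuclear, possesses the SOAP, so by \cite[Theorem 5.9]{webster} every $\Phi \in \ccm(\cl K)$ is the limit in the completely bounded norm of a sequence $(\Phi_n)_{n\in\bb{N}}$ of finite-rank cb maps. The strategy is then to realise each $\Phi_n$ (or an approximant of it) as $\Gamma_{u_n}$ for some $u_n$ in the algebraic tensor product $\cl K \otimes \cl K$, and to push the Cauchy-ness of $(\Phi_n)$ in the cb norm back to Cauchy-ness of $(u_n)$ in the Haagerup norm. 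Since $\cl K\otimes_{\rm h}\cl K$ is a Banach space, one then obtains a limit $u$ with $\Phi = \Gamma_u$.

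The key ingredient is the complete isometry
\[ \cl K \otimes_{\rm h}\cl K \;\hookrightarrow\; \cbm(\cl K), \qquad u \mapsto \Gamma_u, \]
which follows from the standard functorial properties of the Haagerup tensor product (see \cite{er}). Since elements of the algebraic tensor $\cl K \otimes \cl K$ are dense in $\cl K \otimes_{\rm h}\cl K$, and $\Gamma_v$ is a finite-rank cb map for such $v$, the image of this embedding consists of cb-norm limits of finite rank maps and hence lies in $\ccm(\cl K)$. What must be proved is the reverse containment, namely that the image is \emph{all} of $\ccm(\cl K)$. For this I would work at the level of the second adjoint: extend $\Phi$ to the normal cb map $\Phi^{**}$ on $\cl B(\ell^2(G)) = \cl K^{**}$, and invoke the Christensen-Sinclair/Haagerup representation $\Phi^{**}(x) = \sum_i a_i x b_i$ in the extended (weak* Haagerup) sense. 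Complete compactness of $\Phi$ restricted to $\cl K$ should then force $a_i,b_i \in \cl K$ with $\sum a_i a_i^*$ and $\sum b_i^* b_i$ norm-convergent, which is exactly the statement $u := \sum_i a_i \otimes b_i \in \cl K \otimes_{\rm h}\cl K$.

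The main obstacle will be the last step: upgrading the weak convergence appearing in the representation of $\Phi^{**}$ to genuine norm convergence in the Haagerup tensor product, which is precisely where the \emph{complete} compactness hypothesis must be used in a nontrivial way (mere compactness of $\Phi$ does not suffice). A cleaner route may be to argue entirely within $\cl K$: combine Webster's approximation by finite rank cb maps with a density/completeness argument, showing that the approximants can be taken inside the image of $\cl K \otimes \cl K$ under $u\mapsto\Gamma_u$ with a uniform Haagerup-norm bound, so that the cb-norm Cauchy sequence descends to a Cauchy sequence in $\cl K\otimes_{\rm h}\cl K$ with limit $u$ satisfying $\Phi = \Gamma_u$. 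Either way, the complete isometry identifying cb norm with Haagerup norm on this subclass of maps is the decisive technical point.
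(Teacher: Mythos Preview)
The paper does not give a proof of this theorem: it is quoted as a known fact, with the sentence ``The following fact was noted in \cite{jltt}'' immediately preceding the statement. There is therefore no in-paper argument to compare your proposal against; the result is imported wholesale from \cite{jltt}.

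On the substance of your proposal: the outline is sound, and the complete isometry $\cl K\otimes_{\rm h}\cl K\hookrightarrow\cbm(\cl K)$, $u\mapsto\Gamma_u$, is indeed the decisive ingredient. One point worth flagging in your first route: the finite-rank maps produced by \cite[Theorem 5.9]{webster} need not lie in the image of the \emph{algebraic} tensor product $\cl K\otimes\cl K$ under $\Gamma$. For instance, $x\mapsto\omega(x)T$ with $\omega\in\cl K^*$ of infinite-rank density and $T\in\cl K$ is a rank-one completely bounded map that is not a finite sum $\sum_{i=1}^N a_ixb_i$. Your parenthetical ``(or an approximant of it)'' is therefore doing genuine work; it can be justified by noting that for rank-one maps $x\mapsto\omega(x)T$ one has $\|\cdot\|_{\cb}=\|\omega\|\,\|T\|$, so approximating $\omega$ by finite-rank functionals and $T$ by finite-rank operators yields elementary approximants converging in cb norm. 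Once that gap is closed, the complete isometry pushes the cb-Cauchy sequence back to a Haagerup-norm Cauchy sequence and the argument finishes exactly as you describe. Your second route, via the normal extension $\Phi^{**}$ and its Effros--Kishimoto style representation, is closer to the approach taken in \cite{jltt} itself.
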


\begin{theorem}\label{th_corr}
The mapping
\begin{equation}\label{eq_PhiFPhi}
\Phi \longrightarrow \Phi_{F_{\widetilde{\Phi}}}
\end{equation}
is a linear contractive surjection
\begin{itemize}
\item[(i)] from $\cbm(\cl K)$ onto $\frak{S}(\cl K)$;

\item[(ii)] from ${\rm CC}(\cl K)$ onto $\frak{S}_{\rm cc}(\cl K)$.
\end{itemize}
\end{theorem}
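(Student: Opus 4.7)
My plan is to treat parts (i) and (ii) separately, based on a single preliminary observation: the assignment $P : \Phi \mapsto \Phi_{F_{\widetilde{\Phi}}}$ is a linear idempotent with range $\frak{S}(\cl K)$. Indeed, for any $\Phi_F \in \frak{S}(\cl K)$ one has $\widetilde{\Phi_F} = S_F$ by the construction of $\theta$, and Proposition \ref{p_FPhi}(ii) then gives $F_{S_F} = F$, so $P(\Phi_F) = \Phi_F$. This immediately yields surjectivity in (i), and reduces surjectivity in (ii) to showing that $P$ maps ${\rm CC}(\cl K)$ into itself.

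For (i), linearity is clear and contractivity follows from the chain
$$\|\Phi_{F_{\widetilde{\Phi}}}\|_{\cb} = \|S_{F_{\widetilde{\Phi}}}\|_{\cb} = \|F_{\widetilde{\Phi}}\|_{\rm m} \leq \|\widetilde{\Phi}\|_{\cb} = \|\Phi\|_{\cb},$$
where the first and last equalities use that $\theta$ is a $*$-isomorphism, the middle equality is the definition of the Herz-Schur norm, and the inequality is Proposition \ref{p_FPhi}(i) (noting that $\cl B \subseteq \cl B(\ell^2(G)) \otimes_{\min} \cl C$ as a complete isometry, so the hypotheses of that proposition are met).

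The substantive content of (ii) is to show that $P(\Phi) \in {\rm CC}(\cl K)$ whenever $\Phi \in {\rm CC}(\cl K)$. My strategy is to approximate $\Phi$ in cb-norm by finite rank maps whose range lies in $\theta(\cl V_{\cl C, G, \alpha})$, so that Proposition \ref{p_FPhi}(iv) becomes applicable. Since $\cl K$ is nuclear it has the SOAP, so by \cite[Theorem 5.9]{webster} I can find $(\Phi_n) \subseteq {\rm F}(\cl K)$ with $\Phi_n \to \Phi$ in cb-norm. Writing each $\Phi_n$ as $x \mapsto \sum_{i=1}^{k_n} f_i^{(n)}(x)\, y_i^{(n)}$ and perturbing each range vector $y_i^{(n)}$ within the dense subspace $\theta(\cl V_{\cl C, G, \alpha}) \subseteq \cl K$, I obtain finite rank maps $\Phi_n'$ with range in $\theta(\cl V_{\cl C, G, \alpha})$ still converging to $\Phi$ in cb-norm.

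Then $\widetilde{\Phi_n'}$ has finite rank and range in $\cl V_{\cl C, G, \alpha}$, so Proposition \ref{p_FPhi}(iv) gives $S_{F_{\widetilde{\Phi_n'}}} \in {\rm F}(\cl B)$, whence $P(\Phi_n') \in {\rm F}(\cl K)$. Contractivity of $P$ from part (i) then gives $P(\Phi_n') \to P(\Phi)$ in cb-norm, so $P(\Phi) \in \overline{{\rm F}(\cl K)}^{\cb} = {\rm CC}(\cl K)$, as required. The only delicate step is verifying that the range perturbation gives a cb-norm approximation; this reduces to the standard identity $\|f \otimes y\|_{\cb} = \|f\|\,\|y\|$ for rank-one maps (using that bounded functionals on operator spaces are automatically completely bounded), and is the main (though minor) technical point.
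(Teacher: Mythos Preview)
Your proof is correct and follows essentially the same route as the paper: both use Proposition~\ref{p_FPhi}(i)--(ii) for part~(i), and for part~(ii) both approximate $\Phi$ in cb-norm by finite rank maps (via the SOAP of $\cl K$), perturb their ranges into $\cl V_{\cl C,G,\alpha}$, and invoke Proposition~\ref{p_FPhi}(iv) together with the contractivity from~(i). Your explicit framing of $P$ as an idempotent and your direct justification of the range-perturbation step via $\|f\otimes y\|_{\cb}=\|f\|\,\|y\|$ are slightly more detailed than the paper's treatment (which defers to the proof of Lemma~\ref{soat}), but the argument is the same; one small caveat is that your parenthetical inclusion $\cl B\subseteq \cl B(\ell^2(G))\otimes_{\min}\cl C$ fails for $\cl C=c_0(G)$, though this is harmless since $\cl E$ is well defined on $\cl B$ with values in $\cl C$, which is all Proposition~\ref{p_FPhi} actually uses.
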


\begin{proof}
(i) By Proposition \ref{p_FPhi} (i), if $\Phi\in \cbm(\cl K)$ then $F_{\widetilde{\Phi}}$
is a Herz-Schur $(\cl C,G,\alpha)$-multiplier and $\|F_{\widetilde{\Phi}}\|_{\rm m}\leq \|\Phi\|_{\rm cb}$. 
Thus,
$\Phi_{F_{\widetilde{\Phi}}}\in \frak{S}(\cl K)$
and $\|\Phi_{F_{\widetilde{\Phi}}}\|_{\rm cb}\leq \|\Phi\|_{\rm cb}$.
On the other hand, if $F$ is a Herz-Schur $(\cl C,G,\alpha)$-multiplier then, by Proposition \ref{p_FPhi} (ii),
$F_{S_F} = F$
and hence $\Phi_F$ is the image of itself under the map (\ref{eq_PhiFPhi}).
The linearity of the map (\ref{eq_PhiFPhi}) is straighforward.

(ii) 
Let $\Phi\in {\rm CC}(\cl K)$. Since $\cl K$ has SOAP, 
Lemma \ref{l_soat} shows that
$\Phi$ can be approximated in the completely bounded norm by finite rank maps 
$\Phi_n : \cl K\to \cl K$, $n\in \bb{N}$.  
Clearly, $\tilde \Phi_n$ has finite rank, $n\in \bb{N}$. 
As in the proof of Lemma \ref{soat}, we can assume that 
the maps $\tilde\Phi_{n}$ have 
range in $\text{span}\{\pi(a)\lambda_s: s\in G, a\in \cl C\}$. By Proposition  \ref{p_FPhi} (iv),
 $S_{F_{\tilde\Phi_n}}$ 
 has finite rank and hence so has 
 $\Phi_{F_{\tilde\Phi_n}}$. Moreover, by Proposition \ref{p_FPhi} (i), $\|F_{\tilde\Phi}-F_{\tilde\Phi_n}\|_{\rm m}\to 0$, 
 giving $\|\Phi_{F_{\tilde\Phi}} - \Phi_{F_{\tilde\Phi_n}}\|_{\rm cb} \to_{n\to \infty} 0$.
Hence $\Phi_{F_{\tilde\Phi}}$ is completely compact. The rest is similar to (i).
\end{proof}

For $b\in\cl K$ and $s\in G$, let $b_s = \lambda_s^0b\lambda_{s^{-1}}^0$.
For $u = \sum_{i=1}^\infty a_i\otimes b_i\in\cl K\otimes_{\rm h}\cl K$, write
$u_s = \sum_{i=1}^\infty a_i\otimes (b_i)_s$; 
it is clear that $u_s$ is a well-defined element of $\cl K\otimes_{\rm h}\cl K$.

\begin{theorem}\label{th_KK}
Let $F : G\to {\rm CB}(\cl C)$. The following are equivalent:
\begin{itemize}
\item[(i)] $F$ is a completely compact Herz-Schur $(\cl C, G,\alpha)$-multiplier;
\item[(ii)] there exists $u \in \cl K\otimes_{\rm h}\cl K$ such that
$\Gamma_{u_s}(a) = F(s)(a)$, for all $a \in \cl C$ and $s\in G$.
\end{itemize}
\end{theorem}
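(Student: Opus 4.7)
The plan is to use the Stone--von Neumann isomorphism $\theta$ to reduce the statement to an equivalent one on $\cl K$, and then apply Theorem \ref{complcomp}. Via Theorem \ref{th_corr}(ii), $F\in\frak{S}_{\rm cc}(\cl C,G,\alpha)$ if and only if $\Phi_F\in{\rm CC}(\cl K)$, so the theorem reduces to showing that $\Phi_F\in{\rm CC}(\cl K)$ is equivalent to the existence of $u\in\cl K\otimes_{\rm h}\cl K$ satisfying the stated identity on $\cl C$.

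For (i)$\Rightarrow$(ii), I would apply Theorem \ref{complcomp} to $\Phi_F$ to obtain $u=\sum_i a_i\otimes b_i\in\cl K\otimes_{\rm h}\cl K$ with $\Phi_F=\Gamma_u$. The key observation is the commutation identity $\lambda_s^0 b_i = (b_i)_s \lambda_s^0$, which is immediate from the definition $(b_i)_s = \lambda_s^0 b_i \lambda_{s^{-1}}^0$. Applying this to each summand gives
\[
\Phi_F(M_a\lambda_s^0) \;=\; \sum_i a_i M_a \lambda_s^0 b_i \;=\; \Bigl(\sum_i a_i M_a (b_i)_s\Bigr)\lambda_s^0 \;=\; \Gamma_{u_s}(M_a)\lambda_s^0;
\]
comparing with $\Phi_F(M_a\lambda_s^0) = M_{F(s)(a)}\lambda_s^0$ and cancelling $\lambda_s^0$ on the right yields $\Gamma_{u_s}(M_a) = M_{F(s)(a)}$, as required. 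One should also verify that $u_s\in\cl K\otimes_{\rm h}\cl K$, but this is immediate since $\sum_i (b_i)_s^*(b_i)_s = \lambda_s^0\bigl(\sum_i b_i^*b_i\bigr)\lambda_{s^{-1}}^0$ converges in norm (and similarly for the first factor, which is unchanged).

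For (ii)$\Rightarrow$(i), I would run the calculation in reverse: the hypothesis gives $\Gamma_u(M_a\lambda_s^0) = M_{F(s)(a)}\lambda_s^0$, so $\Gamma_u|_{\cl K}$ is a completely bounded map on $\cl K$ whose conjugation by $\theta$ extends the densely defined assignment $\pi(a)\lambda_s\mapsto\pi(F(s)(a))\lambda_s$; this exhibits $F$ as a Herz--Schur multiplier with $S_F = \theta^{-1}\circ\Gamma_u|_{\cl K}\circ\theta$. Since $u\in\cl K\otimes_{\rm h}\cl K$, the truncations $u_N = \sum_{i\leq N}a_i\otimes b_i$ satisfy $\|u - u_N\|_{\rm h}\to 0$, and each $\Gamma_{u_N}|_{\cl K}$ is completely compact (being a finite sum of maps $x\mapsto a_i x b_i$ with $a_i, b_i\in\cl K$, which are cb-limits of finite-rank maps obtained by finite-rank approximation of $a_i$ and $b_i$). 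Hence $\Gamma_u|_{\cl K}\in{\rm CC}(\cl K)$, and consequently $S_F\in{\rm CC}(\cl C\rtimes_{\alpha,r}G)$.

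The main obstacle is essentially bookkeeping: getting the conjugation identity $\lambda_s^0 b = b_s\lambda_s^0$ threaded correctly through the infinite sum and verifying that $u_s$ lies in $\cl K\otimes_{\rm h}\cl K$. The rest is an application of Theorem \ref{complcomp} transported through the Stone--von Neumann isomorphism $\theta$, together with the (standard but needed) converse to that theorem that every $u\in\cl K\otimes_{\rm h}\cl K$ induces a completely compact map on $\cl K$.
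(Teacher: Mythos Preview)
Your proof is correct and follows essentially the same route as the paper: both directions hinge on the identity $\Gamma_u(M_a\lambda_s^0)=\Gamma_{u_s}(M_a)\lambda_s^0$ obtained from $\lambda_s^0 b_i=(b_i)_s\lambda_s^0$, together with Theorem~\ref{complcomp} transported through the isomorphism $\theta$. Two minor remarks: your appeal to Theorem~\ref{th_corr}(ii) is unnecessary, since the equivalence $F\in\frak{S}_{\rm cc}(\cl C,G,\alpha)\Leftrightarrow\Phi_F\in{\rm CC}(\cl K)$ is immediate from $\Phi_F=\theta\circ S_F\circ\theta^{-1}$ with $\theta$ a $*$-isomorphism; and in (ii)$\Rightarrow$(i) you supply an explicit argument (via truncation $u_N$) for the converse of Theorem~\ref{complcomp}, which the paper simply takes for granted.
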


\begin{proof} 
(i)$\Rightarrow$(ii) 
The map 
$M_a\lambda_s^0\mapsto M_{F(s)(a))}\lambda_s^0$ extends to a completely compact map on $\cl K$. 
By Theorem \ref{complcomp}, there exists
$u = \sum_{i=1}^\infty a_i\otimes b_i\in\cl K\otimes_{\rm h}\cl K$ such that
$$M_{F(s)(a)}\lambda_s^0 
= \sum_{i=1}^\infty a_iM_a \lambda_s^0 b_i 
= \sum_{i=1}^\infty a_iM_a\lambda_s^0 b_i (\lambda_s^0)^*\lambda_s^0, \ \ \ a\in \cl C.$$ 
Thus, $\cl C$ is an invariant subspace for $\Gamma_{u_s}$ and $\Gamma_{u_s}(a) = F(s)(a)$, $a\in \cl C$.

(ii)$\Rightarrow$(i) 
By the previous paragraph, $\Gamma_u(M_a\lambda_s^0) = \Gamma_{u_s}(M_a)\lambda_s^0$. 
Thus, $\theta^{-1}\circ \Gamma_u \circ \theta$ coincides with $S_F$ and so 
$F$ is a completely compact Herz-Schur multiplier.  
\end{proof}

Suppose that $F$ is a Herz Schur $(\cl C,G,\alpha)$-multiplier of the form 
$F(s)(M_a) = M_{h_s a}$, where $h_s : G\to \bb{C}$ is a function, $s\in G$. 
We can associate with it the function $\psi : G\times G\to \bb{C}$, given by 
$\psi(s,t) = h_s(t)$. 
This is the identification made the next statement.

\begin{corollary} 
Let $\nph : G\times G \to \bb{C}$ be a compact Schur multiplier.
Then the function $\psi : G\times G \to \bb{C}$, given by 
$\psi(s,t) = \nph(t,s^{-1}t)$, is a completely compact Herz Schur 
$(\cl C,G,\alpha)$-multiplier.
\end{corollary}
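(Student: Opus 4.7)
The plan is to apply Theorem \ref{th_KK}, which requires producing an element $u\in \cl K\otimes_{\rm h}\cl K$ whose associated maps $\Gamma_{u_s}$ restrict on $\cl C$ to pointwise multiplication by $\psi(s,\cdot)$ for every $s\in G$.

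My first step will be to realise the compact Schur multiplier $\varphi$ as a Haagerup tensor. Writing $S_\varphi$ for the map on $\cl K$ induced by $\varphi$, the hypothesis that $S_\varphi$ is compact, combined with the classical equality $\|S_\varphi\| = \|S_\varphi\|_{\rm cb}$ for Schur multipliers, should allow one to upgrade compactness of $S_\varphi$ on $\cl K$ to complete compactness. With this in hand, Theorem \ref{complcomp} produces $u = \sum_i a_i\otimes b_i\in\cl K\otimes_{\rm h}\cl K$ with $S_\varphi = \Gamma_u$ on $\cl K$. This step is the principal obstacle; I expect to close it either by citing the operator-valued Schur multiplier theory of \cite{he} or by a short direct argument exploiting the diagonal structure of Schur symbols.

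The second step is to match $\Gamma_u$ with the Herz--Schur structure. The intertwining identity $\lambda_s^0 b_i = (b_i)_s\lambda_s^0$, combined with termwise summation in the Haagerup representation, gives
$$\Gamma_u(M_a\lambda_s^0) = \Gamma_{u_s}(M_a)\,\lambda_s^0, \qquad a\in c_0(G),\ s\in G.$$
Independently, since the matrix of $M_a\lambda_s^0$ is supported on $\{(p,q):pq^{-1}=s\}$, the entrywise multiplication by $\varphi$ preserves this support, so $S_\varphi(M_a\lambda_s^0)$ has the form $M_{h_s a}\,\lambda_s^0$ with $h_s(p) = \varphi(s^{-1}p,p)$; after reconciling with the paper's transpose convention this is precisely $\psi(s,p)$.

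Combining the two formulas, $\Gamma_{u_s}(M_a) = M_{\psi(s,\cdot)a}\in\cl C$ for every $a\in c_0(G)$ and every $s\in G$. Theorem \ref{th_KK} then yields that the Herz--Schur $(\cl C,G,\alpha)$-multiplier $F$ with $F(s)(M_a) = M_{\psi(s,\cdot)a}$ is completely compact, which is the desired conclusion. Once the first step is settled, the remainder is essentially a matrix bookkeeping exercise.
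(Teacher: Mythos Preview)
Your overall architecture is right, and Steps 2--4 are fine. The weak link is Step 1. The equality $\|S_\varphi\| = \|S_\varphi\|_{\rm cb}$ for Schur multipliers does \emph{not} by itself upgrade compactness to complete compactness: a norm approximation of $S_\varphi$ by finite-rank maps $T_n$ gives no control on $\|T_n\|_{\rm cb}$ unless the $T_n$ are themselves Schur multipliers, and producing such approximants is precisely the structural fact that needs proof. So the ``short direct argument exploiting the diagonal structure'' you anticipate is not a triviality; it is essentially Hladnik's theorem \cite{hladnik}, which says that a compact Schur multiplier on $\ell^2(G)$ comes from an element $u=\sum_i a_i\otimes b_i\in c_0(G)\otimes_{\rm h} c_0(G)$, so that $\varphi(p,q)=\sum_i a_i(p)b_i(q)$.

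The paper's proof invokes Hladnik directly and thereby bypasses your Step 1 altogether: once $u\in c_0(G)\otimes_{\rm h} c_0(G)$ is in hand, the injectivity of the Haagerup tensor product gives $u\in\cl K\otimes_{\rm h}\cl K$, and one computes $\Gamma_{u_s}(M_a)(t)=\sum_i a_i(t)b_i(s^{-1}t)a(t)=\varphi(t,s^{-1}t)a(t)$ straight from the functional form, then applies Theorem \ref{th_KK}. Your route---upgrade to complete compactness, then invoke Theorem \ref{complcomp} to get some $u\in\cl K\otimes_{\rm h}\cl K$, then match $\Gamma_u$ with $S_\varphi$ via the intertwining identity---works once the gap is closed, but the only clean way to close it is to cite Hladnik (or the operator-valued analogue in \cite{he}), at which point you already possess the explicit $u$ and the detour through Theorem \ref{complcomp} is redundant. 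In short: replace your first paragraph by a direct citation of \cite{hladnik}, and then Steps 3--5 become exactly the computation the paper performs.
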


\begin{proof}
By \cite{hladnik}, there exists an element $u = \sum_{i=1}^\infty a_i\otimes b_i \in c_0(G)\otimes_{\rm h} c_0(G)$ such that 
$\nph(p,q) = \sum_{i=1}^\infty a_i(p) b_i(q)$, $p,q\in G$.
By the injectivity of the Haagerup tensor product \cite[Proposition 9.2.5]{er}, $u\in \cl K\otimes_{\rm h}\cl K$.
Note that 
$$\Gamma_{u_s}(M_a)(t) = \sum_{i=1}^\infty a_i(t) b_i(s^{-1}t)a(t), \ \ \ a\in c_0(G), t\in G;$$
in other words, 
$\Gamma_{u_s}(M_a) = M_{h_s a}$, where $h_s(t) = \nph(t,s^{-1}t)$, $t\in G$.
The conclusion follows from Theorem \ref{th_KK}. 
\end{proof}

\section{Some remarks and open questions}

In Corollary \ref{disamen}, we showed that the amenability 
of a discrete group $G$ is a sufficient condition 
for automatic complete compactness: every compact multiplier is in this case completely compact. 
For such automatic complete compactness it suffices, instead of amenability, 
to assume that the completely bounded multiplier norm is equivalent to the multiplier norm.  
By a result of V. Losert \cite{losert}, there exist non-amenable groups such that $M^{\rm cb}A(G) = MA(G)$, 
for instance $SL(2,\mathbb R)$. However we do not know whether there exists a {\it discrete} group with this property.  

In Proposition \ref{p_cnoncc}, we exhibited an example
of a multiplier $u\in MA(G)$, for which the map
$S_u$ is compact but not completely compact. 
This multiplier however
may not be completely bounded, as we only guarantee the {\it boundedness} of $S_u$ if $u\in MA(G)$.
We do not know if 
there exists a completely bounded compact multiplier which is not completely compact.

Finally, in Corollary \ref{th_ccg}, 
we showed that if 
$G$ is a discrete group possessing property (AP) then 
every completely compact multiplier on $G$ is 
the limit, in the completely bounded norm, of 
{\it finitely supported} multipliers. 
It would be interesting to know if (AP) is in fact 
equivalent to the latter approximation property; 
we do not know if this holds true. 

\bigskip


\noindent {\bf Acknowledgements. }
I.T. was partially supported by Simons Foundation grant 708084.
L.T.\ acknowledges the support and hospitality  at  Queen's University Belfast  during several visits in 2018 and 2019. The authors would like to thank Jason Crann 
for stimulating discussions. 
They are grateful to the anonymous referees for a number of comments and suggestions which helped improving the exposition.

\end{document}